\newtheorem{thm}{Theorem}
\newtheorem{lem}{Lemma}
\newtheorem{con}{Conjecture}
\begin{document}
\title{
{\Large{\bf Star edge coloring of Cactus graphs}}}
{\small
\author{
{Behnaz Omoomi}\footnote{Research is partially supported by the Iran National Science Foundation (INSF)}, {Marzieh Vahid Dastjerdi}, and {Yasaman Yektaeian}\\
[1mm]
{\small \it  Department of Mathematical Sciences}\\
{\small \it  Isfahan University of Technology} \\
{\small \it 84156-83111, \ Isfahan, Iran}}

\date{}
%----------------------------------------------------------------------------------------------------------------------
%----------------------------------------------------------------------------------------------------------------------
\maketitle \baselineskip15truept
%----------------------------------------------------------------------------------------------------------------------
%----------------------------------------------------------------------------------------------------------------------
\begin{abstract}
\noindent A star edge coloring of a graph $G$ is a proper edge coloring of $G$ such that no path or cycle of length four is bi-colored. The star chromatic index of $G$, denoted by $\chi^{\prime}_{s}(G)$,  is the minimum $k$  such that $G$ admits a star edge coloring with $k$ colors. 
Bezegov{\'a}~et~al. (Star edge coloring of some classes of graphs, J.  Graph Theory, 81(1), pp.73-82. 2016) conjectured that the star chromatic index of outerplanar graphs with maximum degree $\Delta$,  is at most $\left\lfloor\frac{3\Delta}{2}\right\rfloor+1$.  In this paper,  we prove this conjecture for a class of outerplanar graphs, namely Cactus graphs, wherein every edge belongs to at most one cycle.\\

\noindent {\bf Keywords:} Star edge coloring, star chromatic index, outerplanar graphs, Cactus graphs.\\

\noindent {\bf Mathematics subject classification:} 05C15.
\end{abstract}

%----------------------------------------------------------------------------------------------------------------------
%----------------------------------------------------------------------------------------------------------------------
%----------------------------------------------------------------------------------------------------------------------
\section{Introduction}
All graphs in this paper are simple and undirected. Let $G$ be a graph with vertex set $V(G)$ and edge set $E(G)$. We show every edge $e \in E(G)$ with two endpoints  $u,v \in V(G)$, by $e=uv$, and say that  $u$ and $v$ are adjacent.
   Moreover, we say that two edges $e$ and $e^{\prime}$ are \textit{adjacent}  if they have a common endpoint.

A \textit{proper edge coloring} of $G$ is an assignment of colors to its edges  such that  no  two adjacent  edges   have the same color.
 There are variants  coloring of graphs under the additional  constraints. 
 For example,  
\textit{star edge coloring}  is a kind of  proper edge coloring with no  bi-colored path or cycle of length four (path or cycle with four edges).
 The \textit{star chromatic index}  of $G$, denoted by $\chi^{\prime}_{s}(G)$, is the minimum $k$ such that $G$ has a star edge coloring with $k$ colors \cite{Dvorak,Liu}. Star edge coloring was defined in 2008 by Liu and Deng \cite{Liu}.

In \cite{Bezegova}, Bezegov{\'a} et al. obtained upper bound $\left\lfloor\frac{3\Delta}{2}\right\rfloor$ for the star chromatic index of trees. Omoomi et~al. in  \cite{Omoomi},  gave a polynomial time algorithm to find the star chromatic index of every tree. Using the star chromatic index of trees,  Bezegov{\'a} et al. found upper bound $\left\lfloor\frac{3\Delta}{2}\right\rfloor+12$ for the star chromatic index of outerplnar graphs  \cite{Bezegova}.  An \textit{outerplanar} graph  is a graph that has a planar drawing  for which all vertices belong to the outer face.  Bezegov{\'a} et al. also presented the following conjecture.
%----------------------------------------------------------------------------------------------------------------------
%----------------------------------------------------------------------------------------------------------------------
\begin{con}\rm{\cite{Bezegova}}\label{con1}
If $G$ is an outerplanar graph with maximum degree $\Delta$, then
 \[\chi^\prime_s(G)\leq \left\lfloor\frac{3\Delta}{2}\right\rfloor+1.\]
\end{con}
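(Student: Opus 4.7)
The plan is to prove Conjecture 1 by strong induction on $|V(G)|+|E(G)|$ for outerplanar graphs, using two structural tools: the block decomposition to handle cut vertices, and the fact that every $2$-connected outerplanar graph has an embedding whose outer face is a Hamilton cycle and whose weak dual is a tree. Base cases (edges, paths, and cycles) are handled directly using the known small-cycle star chromatic indices and the tree bound $\lfloor 3\Delta/2\rfloor$ of Bezegov\'a et al.

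For the inductive step, I first dispose of the case in which $G$ has a cut vertex $v$ separating it into outerplanar subgraphs $G_1$ and $G_2$ with $\deg_{G_1}(v)+\deg_{G_2}(v)=\deg_G(v)\le\Delta$. By induction each $G_i$ admits a star edge coloring with $\lfloor 3\Delta/2\rfloor+1$ colors, and since the palette has size at least $\Delta+\lceil\Delta/2\rceil+1$ one can permute the colors in $G_2$ to make the color sets at $v$ in $G_1$ and $G_2$ disjoint. Disjointness at $v$ automatically prevents bi-colored $P_4$ or $C_4$ from crossing $v$, exactly as in the cactus argument. If $G$ is $2$-connected, let $T$ be the weak dual of its outerplanar embedding, and pick a leaf face $F$ sharing a single chord $e=uv$ with the rest of the graph. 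The private vertices of $F$ form a path $u,w_1,\ldots,w_k,v$ (with $k\ge 1$) in which each $w_i$ has degree $2$ in $G$. I remove $w_1,\ldots,w_k$, color the smaller outerplanar graph $G':=G-\{w_1,\ldots,w_k\}$ inductively, and extend the coloring edge by edge along the path.

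The heart of the proof — and its main obstacle — is the extension. At $u w_1$ the forbidden palette consists of the (strongly forbidden) colors of the $\deg_{G'}(u)\le\Delta-1$ edges incident to $u$ in $G'$, together with (weakly forbidden) colors that would close a bi-colored $P_4$ through $u$ via one of those edges or via the chord $e$. A counting argument gives at most $\deg_G(u)-1$ strong and roughly $\deg_G(u)-1$ weak forbidden colors at $uw_1$, so the palette of size $\lfloor 3\Delta/2\rfloor+1$ leaves at least $\lceil\Delta/2\rceil+3-\deg_G(u)\ge 3$ admissible colors whenever $\deg_G(u)\le\Delta$. A symmetric bound holds at $w_k v$, while interior edges $w_iw_{i+1}$ have even more slack. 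The genuinely hard sub-cases are the triangular and quadrilateral leaf faces ($k\in\{1,2\}$) incident to vertices of degree exactly $\Delta$, where strong and weak forbidden sets can nearly exhaust the palette. I expect these to require a \emph{local recoloring} of $G'$ near $u$ or $v$ to free up a color before extending; the difficulty, absent in the cactus setting, is that in a general outerplanar graph the chord $e$ can be shared between several inner faces, so a swap that fixes the coloring across $F$ may induce a bi-colored $P_4$ on the neighbouring face in $T$. Controlling this propagation — presumably by choosing the leaf face $F$ so that $e$ is \emph{safe} (its other side is a long face or has low-degree endpoints) — will be the main technical challenge.
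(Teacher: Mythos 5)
You should first be aware that the statement you set out to prove is presented in the paper as an \emph{open conjecture} (Conjecture~\ref{con1}, due to Bezegov\'a et al.); the paper does not prove it for general outerplanar graphs, only for the subclass of Cactus graphs (Theorems~\ref{th:EO-coloring-sun} and~\ref{thm:main-result1}). So any complete argument here would be a new result, and your proposal is not one: two of its load-bearing steps fail as stated. The cut-vertex reduction is wrong. Making $C_{G_1}(v)$ and $C_{G_2}(v)$ disjoint does \emph{not} ``automatically prevent'' bi-colored paths of length four through $v$: take $P=v_0v_1vv_3v_4$ with $v_0v_1,v_1v\in G_1$ and $vv_3,v_3v_4\in G_2$ and colors $1,2,1,2$; the color sets at $v$ are $\{2\}$ and $\{1\}$, disjoint, yet $P$ is bi-colored, because the two equalities $c(v_0v_1)=c(vv_3)$ and $c(v_1v)=c(v_3v_4)$ each involve an edge not incident to $v$. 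This is precisely why the paper's cactus argument does \emph{not} glue blocks at cut vertices by a palette permutation: Algorithm~\ref{Al2} processes blocks in BFS order and colors each pendant tree $T_x$ with the parity scheme of Algorithm~\ref{Coloring of UCC graph}, explicitly coordinating the colors on $E_3(C,e_i)$ with the colors already present at $x$. Your phrase ``exactly as in the cactus argument'' mischaracterizes what that argument does.

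The greedy extension along a leaf face also does not close. The number of weakly forbidden colors at $uw_1$ is not ``roughly $\deg_G(u)-1$'': for each colored neighbor $z$ of $u$ and each neighbor $z'$ of $z$ with $c(uz)\in C(z')$, the color $c(zz')$ is forbidden on $uw_1$, and by properness at $z$ these are distinct colors, so a single $z$ can contribute up to $\Delta-1$ weak prohibitions and the total can be of order $\deg(u)\cdot\Delta$, swamping the palette. (This is exactly why even the tree bound $\lfloor 3\Delta/2\rfloor$ cannot be obtained greedily and requires the rigid structure exhibited in Lemma~\ref{facts}.) Finally, you yourself defer the ``heart of the proof'' --- short leaf faces at degree-$\Delta$ vertices and the propagation of local recolorings across chords shared by inner faces --- to an unspecified future argument. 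Since these are precisely the tight configurations (cf.\ Theorem~\ref{thm:main-result}), they cannot be treated as routine; as written the proposal is a plan with an acknowledged hole at its center, resting on two incorrect counting/gluing claims, rather than a proof.
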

%----------------------------------------------------------------------------------------------------------------------
%----------------------------------------------------------------------------------------------------------------------

In  \cite{wang}, Wang et al. proved that 
$\chi^{\prime}_s(G) \leq \left\lfloor\frac{3\Delta}{2}\right\rfloor+5$ for  outerplanar graph $G$ with maximum degree $\Delta$.

A {\it Cactus}  is a graph in which every edge belongs to at most one cycle. Since these graphs are outerplanar,  in order to prove Conjecture~\ref{con1}, it is worth to study the star edge coloring of Cactus graphs.
 In this paper,   we 
prove Conjecture~\ref{con1} for Cactus graphs with maximum degree~$\Delta$.

A {\it unicyclic Cactus}  (or UCC  for short) is a Cactus $G=C\cup F$, where $C$ is a cycle  (or an edge) and $F$ is a forest consists of some  rooted trees  with height  at most two and the roots in $C$ (see Figure~\ref{fig:1}). The {\it height} of a rooted tree is the length of the longest path between the root and a leaf.  We denote a rooted tree with root $v$, by $T_v$.
 
Through the paper, we assume that the degree of all vertices except leaves are $\Delta$,  because adding some leaves to the vertices with   degree at least two, does not reduce the star chromatic index.  We call such a graph a \textit{$\Delta$-semiregular}. 
%We first define unicyclic Cactus and extend the star edge coloring of these graphs to a star edge coloring of every Cactus.

The structure of the paper is as follows. In Section \ref{EO-coloring}, we present an algorithm to give a star edge coloring for every $\Delta$-semiregular UCC with at most $\left\lfloor\frac{3\Delta}{2}\right\rfloor+1$ colors.  Using this coloring,   in Section~\ref{Star-edge-coloring of a Cactus Graph}, we find a star edge coloring for every $\Delta$-semiregular Cactus with at most $\left\lfloor\frac{3\Delta}{2}\right\rfloor+1$ colors. Finally, in Section~\ref{tightness}, we show the tightness of the obtained  bound   for  an infinite family of Cactus graphs. 
%----------------------------------------------------------------------------------------------------------------------
%----------------------------------------------------------------------------------------------------------------------
%------our framework-------------------------------
%----------------------------------------------------------------------------------------------------------------------
%----------------------------------------------------------------------------------------------------------------------
%----------------------------------------------------------------------------------------------------------------------
\section{Star edge coloring of  unicyclic Cactus graphs}\label{EO-coloring}
We first introduce the terminology and notations that we need through the paper. 
 For further information on graph theory concepts and terminology we refer the reader to \cite{bondy}.

Let $G$ be a graph. We say $G$ is 2-{\it connected} if   between every two vertices  there are two internally disjoint paths.
A {\it block} in $G$ is a maximal 2-connected subgraph in $G$. 
Thus, every block in a Cactus is either an edge or  a cycle.
A {\it block graph} $H$ of  $G$ is a graph
that its vertices are blocks of $G$ and two vertices of $H$ are adjacent if and only if their corresponding blocks in $G$ intersect in a vertex.
Clearly, every block graph is a tree.

For every block $C$  in Cactus  $G$, we consider three types of edge, as follows.
\begin{align*}
& E_{1}(C)=\{e : e \in E(C) \}.
\\
&E_{2}(C,x) = \{e =xy: x \in V(C), y \not\in V(C) \}
, 
 ~~E_2(C) = \bigcup_{x \in V(C)} E_2(C,x).\\
& E_{3}(C,e)= \{e^\prime : e^\prime \not\in E_1(C)\cup E_2(C), e^\prime ~\text{is adjacent to}~e, e\in E_2(C) \}
,
~~
E_3(C)=\displaystyle \bigcup_{e \in E_2(C)}{E_3(C,e)}.
\end{align*}
   In Figure \ref{fig:1}, a UCC graph with its three type edges is shown.
%----------------------------------------------------------------------------------------------------------------------
%----------------------------------------------------------------------------------------------------------------------
%----------------------------------------------------------------------------------------------------------------------
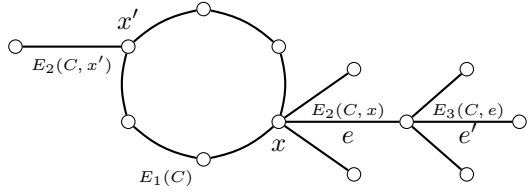
\begin{figure}[H]
\centering
\scalebox{1}{
\begin{tikzpicture}
[n/.style={circle,draw=black,scale=0.9,inner sep=2pt},
m/.style={draw=black}
]
\node[n] (1) at (0,.5)[] {};
\node[n] (2) at (1,1)[label=below: \small{$x$}] {};
\node[n] (3) at (1,2) []{};
\node[n] (44) at (0,2.5)[] {};
\node[n] (4) at (-1,2) [label=above: \small{$x^{\prime}$}]{};
\node[n] (5) at (-1,1)[] {};
\node[n] (2b) at (2.7,1) {};
\node[n] (22b) at (2,1.7) {$  $};
\node[n] (222b) at (2,0.3) {$  $};
\node[n] (2bb) at (4.2,1) {$ $};
\node[n] (2bbb) at (3.5,1.7) {$ $};
\node[n] (2bbbb) at (3.5,0.3) {$ $};
\node[n] (4b) at (-2.5,2) {$ $};

\draw[thick](1)to[bend right=15](2)node[midway,above,yshift=0cm,xshift=-.5cm]{\tiny{$E_1(C)$}};;
\draw[thick](2)to[bend right=15](3);
\draw[thick](3)to[bend right=15](44)node[midway,above,yshift=-50pt,xshift=0pt]{};
\draw[thick](4)to[bend left=15](44);
\draw[thick](4)to[bend right=15](5);
\draw[thick](5)to[bend right=15](1);
\draw[thick](2)--(22b);
\draw[thick](2)--(222b);
\draw[thick](2)--(2b)node[midway,above,yshift=-3pt,xshift=2pt]{\tiny{$E_2(C,x)$}}node[midway,above,yshift=-12pt,xshift=2pt]{\small{$e$}};
\draw[thick](2b)--(2bbbb);
\draw[thick](2b)--(2bbb);
\draw[thick](2b)--(2bb)node[midway,above,yshift=-3pt,xshift=2pt]{\tiny{$E_3(C,e)$}}node[midway,above,yshift=-12pt,xshift=2pt]{\small{$e^{\prime}$}};
\draw[thick](4)--(4b)node[midway,above,yshift=-.5cm,xshift=0cm]{\tiny{$E_2(C,x^{\prime})$}};
\end{tikzpicture}
}
\vspace*{-15mm}
\caption{A UCC graph  with three types of edges.}\label{fig:1}
\end{figure}
%----------------------------------------------------------------------------------------------------------------------
%----------------------------------------------------------------------------------------------------------------------
%----------------------------------------------------------------------------------------------------------------------
In every edge coloring $c$ of $G$ with $k$ colors, we show the color set by $\mathcal{C}:=\{1,2,\ldots, k \}$, the color set of all edges incident to  vertex $x$ by $C(x)$, and  the color of  edge $e$ by $c(e)$.
We now present an algorithm to give  a star edge coloring of every $\Delta$-semiregular UCC, with $\left\lfloor\frac{3\Delta}{2}\right\rfloor+1$ colors.
 
%----------------------------------------------------------------------------------------------------------------------
%----------------------------------------------------------------------------------------------------------------------
%----------------------------------------------------------------------------------------------------------------------
\begin{thm}\label{th:EO-coloring-sun}
If $G$ is a  UCC graph with maximum degree $\Delta$, then
 \[\chi^{\prime}_s(G)\leq \left\lfloor\frac{3\Delta}{2} \right\rfloor+1.\]
\end{thm}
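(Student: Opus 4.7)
The plan is to give an explicit constructive algorithm in three phases matching the natural stratification $E(G) = E_1(C) \cup E_2(C) \cup E_3(C)$ of the edges of a UCC. Fix the palette $\mathcal{C} = \{1, 2, \ldots, \lfloor 3\Delta/2\rfloor + 1\}$. In Phase~1, color the edges of the unique cycle $C$ with a star edge coloring of $C_{|C|}$, which uses three colors in general (four when $|C|=5$). In Phase~2, traverse $C$ and, at each cycle vertex $x$, color the $\Delta-2$ tree-edges of $E_2(C,x)$ with colors from $\mathcal{C}$ avoiding the two cycle-colors at $x$, coordinating with the choices at the two neighbors of $x$ on $C$. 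In Phase~3, for each middle vertex $y$ (a child of some cycle vertex $x$), color the $\Delta-1$ leaf-edges of $E_3(C,xy)$ from the remaining colors.

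The counting is tight but should go through. After Phases~1 and~2, every cycle vertex $x$ uses exactly $\Delta$ colors, so $\lfloor \Delta/2 \rfloor + 1$ colors of $\mathcal{C}$ are unused at $x$. At a middle vertex $y$ with $c(xy)=\gamma$, a bi-colored $P_5$ of the form $z\,y\,x\,w\,\cdot$ (and its mirror) is blocked precisely when $c(yz)$ avoids the \emph{danger set} $F_y = \{c(xw) : w \in N(x)\setminus\{y\},\ \gamma \in C(w)\setminus\{c(xw)\}\}$. Provided Phase~2 is executed so that $|F_y|\le \lfloor\Delta/2\rfloor+1$ at every middle $y$, the $\Delta-1$ leaf-edges at $y$ can be properly colored from the $\ge \Delta-1$ admissible colors; meanwhile, bi-colored $P_5$'s whose middle edge lies inside a tree $T_v$ or on $C$ are ruled out by the choices already made in Phases~1 and~2.

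The main obstacle is the design of Phase~2 so that this size bound on $F_y$ holds uniformly: it amounts to controlling the reuse of the color $\gamma$ among the neighbors of $x$ simultaneously with the choice of $E_2$-colors at $x$. I would handle this by fixing a canonical ordering of $\mathcal{C}\setminus\{\alpha,\beta\}$ at each cycle vertex $x$ with cycle-colors $\alpha,\beta$, assigning $E_2$-edges greedily along this ordering, and verifying by a local (walk-around-$C$) induction that consecutive cycle vertices agree well enough to keep every danger set within the required size. The genuinely delicate points are the short cycles $|C|\in\{3,4,5\}$, where Phase~2 cannot be carried out vertex by vertex in isolation, and small $\Delta$ (in particular $\Delta\in\{3,4\}$), where $\lfloor\Delta/2\rfloor+1$ is smallest and the pigeonhole is tightest; each of these I expect to dispatch by an explicit case analysis yielding the stated bound $\chi'_s(G)\le\lfloor 3\Delta/2\rfloor+1$.
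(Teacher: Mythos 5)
Your proposal correctly sets up the three-phase structure ($E_1$, then $E_2$, then $E_3$) and correctly identifies the binding constraint (the ``danger set'' $F_y$ must have size at most $\lfloor\Delta/2\rfloor+1$ at every middle vertex), but the proof has a genuine gap exactly where you locate the ``main obstacle'': the Phase~2 coordination and the Phase~3 assignment achieving the danger-set bound are never constructed, only hoped for (``I would handle this by fixing a canonical ordering\ldots I expect to dispatch by an explicit case analysis''). This is not a minor omission, because your design choice makes the coordination genuinely global: since you let $E_2(C,x)$ reuse cycle colors that merely do not appear at $x$, you create cross-cycle conflicts such as the path $y\,x\,w\,w'\,u$ with $y$ a child of $x$ and $u$ a child of $w'$ (edges $yx\in E_2(C,x)$, $xw,ww'\in E_1(C)$, $w'u\in E_2(C,w')$), which is bi-colored when $c(yx)=c(ww')$ and $c(w'u)=c(xw)$; blocking these is not achieved by coordinating only with the two cycle-neighbours of $x$, and your claim that bi-colored paths with middle edge on $C$ ``are ruled out by the choices already made in Phases~1 and~2'' is unjustified as stated. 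Likewise, within a single tree $T_x$ the bound $|F_y|\le\lfloor\Delta/2\rfloor+1$ requires controlling, for every pair of children $y_j,y_k$ of $x$, which of the two colors $c(xy_j),c(xy_k)$ may be reused at the other child; a plain greedy pass along a canonical ordering does not obviously guarantee this, and no invariant is proved.

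For comparison, the paper removes both difficulties by a simpler global choice that your plan could adopt: first star-color $C$ optimally (at most $4$ colors), then color \emph{all} of $E_2(C)$ with the same $\Delta-2$ colors taken from $\mathcal{C}'$, the colors unused on $C$. Since every $E_2$ and $E_3$ color is then disjoint from the cycle's colors, no bi-colored $4$-path can involve a cycle edge, so all interactions across $C$ vanish and the problem becomes local to each $T_x$. Inside $T_x$, the paper colors $\lfloor\Delta/2\rfloor+1$ edges of $E_3(C,e_i)$ with $\mathcal{C}\setminus C(x)$ and completes with an explicit parity rule (the color of $e_j$ may be reused below $e_i$ only when the indices $i,j$ have the prescribed relative parity), which is precisely a constructive certificate that for every pair $j<k$ either $c(e_k)\notin C(y_j)$ or $c(e_j)\notin C(y_k)$ --- i.e., the property your danger-set bound is meant to encode. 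Until you supply an explicit Phase~2/Phase~3 rule with a proved invariant of this kind (or switch to reserving the cycle's colors as the paper does), the argument does not yet establish $\chi'_s(G)\le\lfloor 3\Delta/2\rfloor+1$.
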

\begin{proof} 
Without loss of generality, assume that $G=C\cup F$ is  $\Delta$-semiregular.
 In Algorithm~\ref{Coloring of UCC graph}, we present a  star edge coloring of $G$ with $\left\lfloor\frac{3\Delta}{2} \right\rfloor+1$ colors.
%----------------------------------------------------------------------------------------------------------------------
%----------------------------------------------------------------------------------------------------------------------
%----------------------------------------------------------------------------------------------------------------------
\begin{algorithm}[h]
{ \fontsize{9}{10} \selectfont
\caption{Star edge coloring of $\Delta$-semiregular UCC  Graphs}
\label{Coloring of UCC graph}
\begin{algorithmic}[1]
\REQUIRE  Graph $G=C\cup F$ as a $\Delta$-semiregular UCC, color set $\mathcal{C}=\{1,\ldots,\left\lfloor\frac{3\Delta}{2}\right\rfloor+1\}$.
\vspace{0.1cm}
\ENSURE Star edge coloring of  $G$ with color set $\mathcal{C}$.
\vspace{0.2cm}
%----------------------------------------------------------------------------------------------------------------------
%\IF{$E_1(C)$ are uncolored}
%	\STATE  Give an optimum star edge coloring of  $C$.
%\ENDIF
\STATE Give an optimum star edge coloring of  $C$
\STATE Set $\mathcal{C}^\prime=\mathcal{C} \setminus \cup_{e\in E_1(C)}{c(e)}$.
\STATE  Color edges of $E_2(C)$ with $\Delta -2$  colors from $\mathcal{C}^\prime$.
\FOR{each $x$  in $V(C)$}
    \FOR{$i$ from 1 to $\Delta-2$}
         \STATE Set $e_{i}$ the $i$-th  edge  incident  to $x$ in $T_{x}$.
          \STATE   Color $\left \lfloor \frac{\Delta}{2} \right\rfloor+1$ edges of $E_3(C,e_{i})$ with colors of $\mathcal{C}\setminus C(x)$.
           \IF {$i  \ is \ even$}
    		    \STATE   Use colors of edges $e_{2k+1},e_{2k^{\prime}}$, where $2k+1<i$ and $i<2k^{\prime}$, for uncolored edges in $E_3(C,e_{i})$.
          \ELSE
    		    \STATE Use colors of edges $e_{2k},e_{2k^{\prime}+1}$, where $2k< i$ and  $i <2k^\prime+1$, for uncolored edges in $E_3(C,e_{i})$.
	     \ENDIF
     \ENDFOR
\ENDFOR
\end{algorithmic}
}
\end{algorithm}
%----------------------------------------------------------------------------------------------------------------------
%----------------------------------------------------------------------------------------------------------------------
%----------------------------------------------------------------------------------------------------------------------

The performance of Algorithm~\ref{Coloring of UCC graph} is as follows. 
We first  give an optimum star edge coloring for  block $C$ in $G=C\cup F$, where $\chi^{\prime}_s(C ) \leq 4$  (see the proof of Theorem 5.1 in \cite{Dvorak}). 
Let $\mathcal{C}^\prime$   be the set of colors that are not used for coloring $C$. 
We choose $\Delta-2$ colors from $\mathcal{C}^\prime$   for coloring the edges of $E_2(C)$. 
Assume that, $x\in V(C)$ and $e_{i}\in T_{x}$ is the $i$-th edge incident  to $x$, where  $1\leq i\leq \Delta-2$.

We color $\left \lfloor \frac{\Delta}{2} \right\rfloor+1$ edges of $E_3(C,e_{i})$ with colors that are not used in $C(x)$. 
To complete the colors of the edges in $E_3(C,e_{i})$ we have two possibilities, index $i$ is even or odd.
 If $i$ is even (or odd), we use colors of incident edges to $x$ with even indices more (or less) than $i$ and odd indices less (or more) than $i$.  Note that in both cases, there exist at least $\lfloor\frac{\Delta}{2}\rfloor-1$ colors  for the uncolored  edges in $E_3(C,e_{i})$.  Thus, we can color all  edges in $G$ with at most $\left\lfloor\frac{3\Delta}{2}\right\rfloor+1$ colors.
 
 We now prove that  this coloring is a star edge coloring. For this  purpose, we first  check the coloring of paths of length four in $T_{x}$. 
 Consider two arbitrary  edges  $e_{j}=xy_j$ and $e_{k}=xy_k$ in $E_2(C,x)$. Without loss of generality assume that  $j<k$.
   If the parity of $j$ and $k$ are different, then $c(e_{k})\not\in C(y_j)$.
Similarly, if the parity of $j$ and $k$ is the same, then  $c(e_{j})\not\in C(y_k)$. 
 Thus, we have no bi-colored path of length four in $T_{x}$.
 Moreover, since  the color set of the edges in $E_3(C)$ is disjoint from the color set of  $C$,  the obtained coloring is a star edge coloring of $G$.
\end{proof}
%----------------------------------------------------------------------------------------------------------------------
%----------------------------------------------------------------------------------------------------------------------
%-----------------------------------------------------------------------------------------------------------------------
\section{Star edge coloring of Cactus graphs}\label{Star-edge-coloring of a Cactus Graph}
In this section, we prove Conjecture \ref{con1} for  Cactus graphs.
%----------------------------------------------------------------------------------------------------------------------
%----------------------------------- -----------------------------------------------------------------------------------
\begin{thm}\label{thm:main-result1}
If $G$ is a  Cactus with maximum degree $\Delta$, then
 \[\chi^{\prime}_s(G) \leq \left\lfloor \frac{3\Delta}{2}\right\rfloor+1.\]
\end{thm}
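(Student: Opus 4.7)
The plan is to reduce Theorem~\ref{thm:main-result1} to Theorem~\ref{th:EO-coloring-sun} by induction on the number of cycle-blocks of $G$. The base case of zero cycles is immediate: $G$ is a forest and the tree bound $\chi^\prime_s(G)\le\lfloor 3\Delta/2\rfloor$ of Bezegov\'a et al.~\cite{Bezegova} is already within our budget.

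For the inductive step I would identify an \emph{outermost} cycle $C$ of $G$: a cycle-block of $G$ together with a cut vertex $v\in V(C)$ such that every other cycle-block of $G$ lies in the component of $G-(V(C)\setminus\{v\})$ containing $v$. Such a $C$ exists because the block tree of $G$ is a tree. Let $F$ be the forest of all tree-branches of $G$ rooted in $V(C)\setminus\{v\}$ (these are cycle-free by the choice of $C$), set $G_1=C\cup F$ and $G_2=G-(V(G_1)\setminus\{v\})$. Then $G_2$ is a Cactus with strictly fewer cycle-blocks and $V(G_1)\cap V(G_2)=\{v\}$. Color $G_2$ inductively with color set $\mathcal{C}=\{1,\dots,\lfloor 3\Delta/2\rfloor+1\}$. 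For $G_1$ I would apply a mild extension of Algorithm~\ref{Coloring of UCC graph}: if some tree in $F$ has height greater than $2$, first star-edge color its deep part (everything below depth~$2$ from $V(C)$) using the tree algorithm of~\cite{Omoomi}, feeding the colors on its depth-$2$ edges into the $E_3$-step of Algorithm~\ref{Coloring of UCC graph} as pre-assigned constraints.

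Finally, the colorings $\sigma_1$ of $G_1$ and $\sigma_2$ of $G_2$ must be glued at $v$. I would apply a permutation $\pi$ of $\mathcal{C}$ to $\sigma_1$ so that (i) the color sets at $v$ in $\pi\circ\sigma_1$ and in $\sigma_2$ are disjoint, and (ii) no bi-colored $P_4$ crosses $v$ in the combined coloring. Condition (i) is easy: since $\deg_G(v)\le\Delta$ and $|\mathcal{C}|=\lfloor 3\Delta/2\rfloor+1$, at least $\lfloor\Delta/2\rfloor+1$ colors are unused at $v$ by $\sigma_2$, which comfortably accommodates the two colors that $\sigma_1$ places at $v$ (those of the two $C$-edges). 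The main obstacle is condition (ii): the two $C$-edges at $v$ must be chosen so as not to create bi-colored paths of length four with the $\sigma_2$-edges at $v$ and at the $G_2$-neighbors of $v$. Here I would exploit the freedom of Step~1 of Algorithm~\ref{Coloring of UCC graph}, which requires only an arbitrary optimum star edge coloring of $C$ with $\le 4$ colors: pre-select the two $C$-colors at $v$ so as to avoid all $\sigma_2$-colors within distance two of $v$, and then run Algorithm~\ref{Coloring of UCC graph} on $G_1$ with these two colors fixed. Showing that such a compatible seeding is always feasible, via a short case analysis on $\deg_{G_2}(v)$ and on the parity of $\Delta$, is where the real difficulty of the proof will lie.
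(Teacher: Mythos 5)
Your reduction to Theorem~\ref{th:EO-coloring-sun} by peeling off a deepest cycle-block and gluing at the single cut vertex $v$ is a legitimately different strategy from the paper (which instead runs over \emph{all} blocks, bridges included, in BFS order of the block graph, builds a local UCC graph $G_{C_i}$ for each one, and gives explicit coloring patterns for each attached uncolored cycle according to its length modulo $3$, with a special $G_{C_i}$ modification for attached $3$- and $4$-cycles). But as written your argument has a genuine gap, and it sits exactly where the paper invests all of its work: the gluing step. You state condition (ii) --- no bi-colored $P_4$ crossing $v$ --- and propose to guarantee it by pre-selecting the two colors of the $C$-edges at $v$ ``so as to avoid all $\sigma_2$-colors within distance two of $v$.'' That sufficient condition is in general unsatisfiable: $v$ can have $\Delta-2$ neighbours in $G_2$, each with up to $\Delta-1$ further edges, so the $\sigma_2$-colors within distance two of $v$ can exhaust essentially the whole palette of $\lfloor 3\Delta/2\rfloor+1$ colors. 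What is actually needed is a correlated condition (a color of a $C$-edge at $v$ is dangerous only if it reappears on a $G_2$-edge at distance two through a middle edge whose color is repeated on the $G_1$ side), and verifying that such a choice always exists --- together with compatibility of the fixed seed with an optimum star coloring of the cycle $C$ and with the subsequent $E_2/E_3$ steps of Algorithm~\ref{Coloring of UCC graph} --- is precisely the content you defer with ``this is where the real difficulty of the proof will lie.'' A proof cannot end there; the paper's counterpart of this step is its induction over BFS-ordered blocks, where the case analysis showing no bi-colored $P_4$ can appear at round $i$ (using that $E_3(C_i,e_1)$ is colored before $E_3(C_i,e_2)$, and the explicit patterns of Figures~\ref{fig2} and~\ref{fig3}) is the bulk of the argument.

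A second, smaller gap: your $F$ need not consist of trees of height at most two, so $G_1=C\cup F$ need not be a UCC in the paper's sense, and Theorem~\ref{th:EO-coloring-sun} does not apply directly. Your fix --- star-color the deep part by the tree algorithm of~\cite{Omoomi} and feed the depth-two colors into the $E_3$-step ``as pre-assigned constraints'' --- is itself unproven: Algorithm~\ref{Coloring of UCC graph} chooses the $E_3$ colors through a parity-indexed scheme tied to $C(x)$, and arbitrary pre-assigned colors from below can conflict with it, recreating at every branch vertex the same interface problem you face at $v$. The paper sidesteps this entirely by treating every bridge as a block of its own, so each $G_{C_i}$ only ever sees trees of height at most two and deeper structure is handled when its own block is reached in the BFS order. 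To salvage your approach you would need to prove both interface lemmas (at $v$, and at the depth-two frontier of deep trees); as it stands, the proposal reduces the theorem to exactly the statements that carry the difficulty.
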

\begin{proof}
In \cite{Bezegova}, it is proved that the star chromatic index of outerplanar graphs with maximum degree three is at most  $5$. Thus, we prove the statement for   $\Delta\geq 4$.

 Let  $\sigma=(C_1,\ldots,C_t)$ be an enumeration   of blocks in $G$, in the order in which they are visited by {\it  breath first search} (BFS) in block graph $G$. The BFS is a traversing algorithm where  we start traversing from a selected vertex and explore all of the neighbour vertices at the present level prior to moving on to the vertices at the next level. 
 
 For every block $C_i$, $1\leq i\leq t$,  we construct a UCC graph  $G_{C_i}$, corresponds to three types edges of $C_i$ in $G$, as follows.   If there exists a cycle $D$ of length 3 or 4 in $E_2(C_i)\cup E_3(C_i)$,  then   remove edges of $E(D) \cap E_3(C_i)$ and add one new edge to each common vertex between deleted edges and the edges in $E_2(C_i)$  (see Figure~\ref{fig:X}).
%----------------------------------------------------------------------------------------------------------------------
%----------------------------------------------------------------------------------------------------------------------
%----------------------------------------------------------------------------------------------------------------------
%----------------------------------------------------------------------------------------------------------------------
%----------------------------------------------------------------------------------------------------------------------

 We now present the following algorithm, to provide a star edge coloring for every $\Delta$-semiregular Cactus with $\left\lfloor \frac{3\Delta}{2}\right\rfloor+1$ colors.
%----------------------------------------------------------------------------------------------------------------------
%----------------------------------------------------------------------------------------------------------------------
\begin{algorithm}[H]
{ \fontsize{9}{10} \selectfont
\caption{Star edge coloring of Cactus graphs}\label{Al2}
\label{Coloring of cactus}
\begin{algorithmic}[1]
\REQUIRE  $\Delta$-semiregular Cactus $G$, enumeration $\sigma=(C_1,\ldots,C_t)$ for the blocks of $G$ obtained by BFS in block graph $G$,
  color set $\mathcal{C}=\{1,\ldots,\left\lfloor\frac{3\Delta}{2}\right\rfloor+1\}$.
\vspace{0.1cm}
\ENSURE Star edge coloring of  $G$ with color set $\mathcal{C}$.
\vspace{0.2cm}
%----------------------------------------------------------------------------------------------------------------------
%----------------------------------------------------------------------------------------------------------------------
\FOR{ $i$ from 1 to $t$}  
      \STATE Construct UCC graph $G_{C_i}$ from three types edges of $C_i$ in $G$.
      \IF{$i=1$}\label{alg:line1-col1}
          \STATE Apply Algorithm~\ref{Coloring of UCC graph} for  $G_{C_1}$.\label{alg:linealg1}
      \ELSE
      \STATE  Consider restriction partial coloring of $G$ for their corresponding edges in $G_{C_i}$.
           \FOR{Every vertex $x$ of $C_i$}
                  \IF{Edges of $T_x$ is uncolored}
                       \STATE  Run lines 2,3, and 5 to 13 in Algorithm~\ref{Coloring of UCC graph} for $C_i\cup T_x$.
                  \ELSIF{Eedges of $E_3(C_i)\cap T_x$ are uncolored}
                        \STATE Run lines 5 to 13 in Algorithm~\ref{Coloring of UCC graph} for $C_i\cup T_x$.
                  \ENDIF
           \ENDFOR
       \ENDIF \label{alg:linel-col1}
           \STATE Set $c_i$ as the obtained coloring of $G_{C_i}$.
           \STATE Color uncolored edges of  $E_1(C_i)\cup E_2(C_i)$ in $G$ as same as the corresponding edges in $G_{C_i}$.
       \FOR{ every uncolored cycle $D_x\neq C_i$ with vertex $x$ in $C_i$ }\label{alg:line1-D}
       \STATE Set $e_1$ and $e_2$ as the edges incident to $x$ in  $D_x$, where $E_3(C_i,e_1)$ is colored before $E_3(C_i,e_2)$ in~$G_{C_i}$.
    \STATE Enumerate edges of $D_x$ as $D_x=e_1,e_2,\ldots,e_n$, where $n$ is the length of $D$.
       \STATE Set $C^\prime(x)=\mathcal{C}\setminus C(x)$.
   \IF{ $n=0 \pmod{3}$}
                  \STATE Select edges $e\in E_3(C_i,e_1)$ and $e^\prime\in E_3(C_i,e_2)$ of $G_{C_i}$ with the same color $c^\prime$ in $C^\prime(x)$.
                  \STATE Complete the coloring of $D_x$ in $G$ by coloring pattern $\underbrace{c_i(e_1),c_i(e_2),c^\prime},\ldots$.
             \ELSIF{$n=1\pmod{3}$}
                   \STATE Select edge  $e^{\prime}\in E_3(C_i,e_{2})$ in $G_{C_i}$, with  $c_i(e^\prime)\in$ $C^\prime(x)$. 
                 		 \IF{$n=4$ and $\Delta\neq 4$}
%                 		  	\IF{$\Delta=4$}                   
%                				  \STATE Complete the coloring of $D_x$ using pattern $c_i(e_1),c_i(e_2),c_i(e_1),c_i(e^{\prime})$.
%                				 \ELSE
                				 \STATE Select edge  $e\in E_3(C_i,e_1)$,  with $ c_i(e)\in C(x)$.
                   			     \STATE Complete the coloring of $D_x$ using pattern $c_i(e_1),c_i(e_2),c_i(e^\prime),c_i(e)$.
%                   			     \ENDIF
              		      \ELSE
                  			      \STATE Complete the coloring of $D_x$ using coloring pattern $c_i(e_1),\underbrace{c_i(e_2),c_i(e_1),c_i(e^\prime)},\ldots$.
                			  \ENDIF
            \ELSE
                     \STATE   Select edges $e\in E_3(C_i,e_1)$ and $e^{\prime}\in E_3(C_i,e_{2})$ in $G_{C_i}$, with  $c_i(e)\in C^\prime(x)$ and $c_i(e^\prime)=c_i(e_1)$.
                      \IF{ $n=5$}
                        \STATE Choose color $c^\prime\in C^{\prime}(x)\setminus c_i(e_1) $.
                        \STATE Complete  the coloring of $D_x$ using coloring pattern $c_i(e_1),c_i(e_2),c_i(e_1),c^\prime,c_i(e)$.
                        \ELSE
                           \STATE  Complete the coloring of $D_x$ using coloring pattern\\ $c_i(e_1),c_i(e_2),\underbrace{c_i(e_1),c_i(e),c_i(e_2)},\ldots,\underbrace{c_i(e_1),c_i(e),c_i(e_2)},c_i(e_1),c_i(e_2),c_i(e)$.
                          \ENDIF                      
            \ENDIF  
      \ENDFOR \label{alg:linel-D}
      \STATE Color the remaining uncolored edges of  $E_3(C_i)$ in $G$ as  same as the coloring of corresponding edges  in $G_{C_i}$.      
\ENDFOR
\end{algorithmic}
}
\end{algorithm}
%----------------------------------------------------------------------------------------------------------------------
%----------------------------------------------------------------------------------------------------------------------
 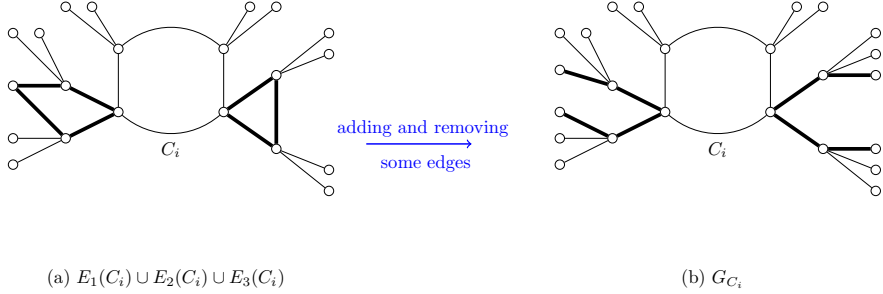
\begin{figure}[h]
\centering
\scalebox{0.7}{
\begin{subfigure}{0.4\textwidth}
\begin{tikzpicture}
[n/.style={circle,draw=black,scale=0.9,inner sep=2pt},
m/.style={draw=black}
]
\node[n] (2) at (1,1) { };
\node[n] (w) at (1,2.2) { };
\node[n] (z) at (-1,2.2) { };
\node[n] (5) at (-1,1) { };
\node[n] (22b) at (2,1.7) { };
\node[n] (222b) at (2,0.3) { };
\node[n] (c1) at (3,2.5) {$  $};
\node[n] (c2) at (3,2.1) {$  $};
\node[n] (d1) at (3,-.1) {$  $};
\node[n] (d2) at (3,-.5) {$  $};
\node[n] (y1) at (-2,1.5) { };
\node[n] (y2) at (-2,0.5) { };
\node[n] (5bbb-bb) at (-3,0) {};
\node[n] (5bbb-bbb) at (-3,0.5) {};
\node[n] (x) at (-3,1.5) {};
\node[n] (z1) at (-2,3) {$ $};
\node[n] (z2) at (-1.5,3) {$ $};
\node[n] (w1) at (1.5,3) {$ $};
\node[n] (w2) at (2,3) {$ $};
\node[n] (y11) at (-3,2.5) {$ $};
\node[n] (y12) at (-2.5,2.5) {$ $};

\draw(2)--(w);
\draw(w)to[bend right=40](z)node[midway,above,yshift=-50pt,xshift=0pt]{};
\draw(5)to[bend right=40](2)node[midway,above,yshift=0pt,xshift=0pt]{\small{$C_i$}};
\draw(z)--(5);
\draw[line width=2pt](2)--(22b);
\draw[line width=2pt](2)--(222b);
\draw[line width=2pt](22b)--(222b);
\draw(22b)--(c1);
\draw(22b)--(c2);
\draw(222b)--(d1);
\draw(222b)--(d2);
\draw(z)--(z1);
\draw(z)--(z2);
\draw(w)--(w1);
\draw(w)--(w2);
\draw[line width=2pt](5)--(y1);
\draw[line width=2pt](5)--(y2);
\draw[line width=2pt](y2)--(x);
\draw(y2)--(5bbb-bb);
\draw(y2)--(5bbb-bbb);
\draw[line width=2pt](x)--(y1);
\draw(y1)--(y11);
\draw(y1)--(y12);
\end{tikzpicture}
\caption{$E_1(C_i)\cup E_2(C_i)\cup E_3(C_i)$}\label{fig:X.1}
\end{subfigure}
%----------------------------------------------------------------------------------------------------------------------
\begin{subfigure}{0.1\textwidth}
\centering
\begin{tikzpicture}
\draw[thick, blue,->](0,0)--(2,0)node[midway,above,yshift=0pt,xshift=2pt]{\small{adding and removing}} node[midway,below,yshift=-2pt,xshift=2pt]{\small{ some edges}};
\end{tikzpicture}
\end{subfigure}
%----------------------------------------------------------------------------------------------------------------------
\hspace*{2.5cm}
\centering
\begin{subfigure}{0.4\textwidth}
\begin{tikzpicture}
[n/.style={circle,draw=black,scale=0.9,inner sep=2pt},
m/.style={draw=black}
]
\node[n] (2) at (1,1) {};
\node[n] (w) at (1,2.2) {};
\node[n] (z) at (-1,2.2) { };
\node[n] (5) at (-1,1) { };
\node[n] (22b) at (2,1.7) {$  $};
\node[n] (222b) at (2,0.3) {$  $};
\node[n] (c1) at (3,2.5) {};
\node[n] (c2) at (3,2.1) {};
\node[n] (d1) at (3,-.1) {};
\node[n] (d2) at (3,-.5) {};
\node[n] (y1) at (-2,1.5) { };
\node[n] (y2) at (-2,0.5) { };
\node[n] (5bbb-bb) at (-3,0) {};
\node[n] (5bbb-bbb) at (-3,0.5) {};
\node[n] (z1) at (-2,3) {$ $};
\node[n] (z2) at (-1.5,3) {$ $};
\node[n] (w1) at (1.5,3) {$ $};
\node[n] (w2) at (2,3) {$ $};
\node[n] (y11) at (-3,2.5) {$ $};
\node[n] (y12) at (-2.5,2.5) {$ $};
\node[n] (t1) at (-3,1.8) {};
\node[n] (t2) at (-3,1) {};
\node[n] (n1) at (3,1.7) {$ $};
\node[n] (n2) at (3,.3) {$ $};

\draw(2)--(w);
\draw(w)to[bend right=40](z)node[midway,above,yshift=-50pt,xshift=0pt]{};
\draw(5)to[bend right=40](2)node[midway,above,yshift=0pt,xshift=0pt]{\small{$C_i$}};
\draw(z)--(5);
\draw[line width=2pt](2)--(22b);
\draw[line width=2pt](2)--(222b);
\draw(22b)--(c1);
\draw(22b)--(c2);
\draw[line width=2pt](22b)--(n1);
\draw(222b)--(d1);
\draw(222b)--(d2);
\draw[line width=2pt](222b)--(n2);
\draw(z)--(z1);
\draw(z)--(z2);
\draw(w)--(w1);
\draw(w)--(w2);
\draw[line width=2pt](5)--(y1);
\draw[line width=2pt](5)--(y2);
\draw(y2)--(5bbb-bb);
\draw(y2)--(5bbb-bbb);
\draw[line width=2pt](y1)--(t1);
\draw[line width=2pt](y2)--(t2);
\draw(y1)--(y11);
\draw(y1)--(y12);
\end{tikzpicture}
\caption{$G_{C_i}$}\label{fig:X.2}
\end{subfigure}
}
\caption{Construction of UCC graph $G_{C_i}$.}\label{fig:X}
\end{figure}

%----------------------------------------------------------------------------------------------------------------------
%----------------------------------------------------------------------------------------------------------------------
Algorithm~\ref{Al2} runs as follows.  In $i$-th round, we consider $i$-th block  of $G$ in enumaration $\sigma$,
  and construct graph $G_{C_i}$. 
Note that for $i>1$,   $G_{C_i}$ has a partial star edge coloring that edges of $C_i$ and some edges of $E_2(C_i)\cup E_3(C_i)$ are colored.
 The edges in the same level of $T_x$ either are already colored or not.
In lines~\ref{alg:line1-col1}~to~\ref{alg:linel-col1} of Algorithm~\ref{Al2},  we complete the coloring of $G_{C_i}$.
%
%To complete the coloring of $G_{C_i}$, for every vertex $x$, if $E_2(C_i,x)$ is uncolored then we run lines 2, 3 and 5  to 12 in Algorithm~\ref{Coloring of UCC graph} for $T_x$. Morover, if $E_2(c_i,x)$ is colored but edges in $E_3(C_i) \cap T_x$ are uncolore, then run lines 5 to 12 in Algorithm~\ref{Coloring of UCC graph} for $T_x$.
%%
%%. Note that for every vertex $x$ in $C_i$,  all edges in each level of $T_x$ are colored or not. Indeed,  it is not possible we have colored and uncolored edges in a level of $T_x$.   We first  use  Algorithm~\ref{Coloring of UCC graph} for uncolored edges of $G_{C_i}$ to  complete  its coloring. Note that  we never change the color of edges that are determined before. We show that the obtained coloring is a star edge coloring of $G_{C_i}$$.

   We now color  edges of  $E_1(C_i)\cup E_2(C_i)$ in $G$ as same as their correspondig edges  in $G_{C_i}$. 
    Let $H_{C_i}$ be the induced subgraph of $G$ on $E_1(C_i)\cup E_2(C_i)\cup E_3(C_i)$ and  all cycles  share a  vertex with $C_i$.  We obtain a coloring of  $H_{C_i}$, as follows.
    
     We first complete coloring of every uncolored cycle that  share a vertex with $C_i$ in $G$, according to their  length (see lines~\ref{alg:line1-D}~to~\ref{alg:linel-D} in the algorithm). 
 For example, let $D_x=e_1,e_2,\ldots,e_n$ be an uncolored cycle with two edges $e_1$ and $e_2$  incident to $x\in C_i$. For $D_x$ with different lengths,
we demonstrate its coloring in   Figure~\ref{fig2}~and~\ref{fig3},  by the following assumptions. 
Let $C(x)=\{1,\ldots,\Delta\}$,  $c(e_1)=1$, $c(e_{2})=2$, $\{A,B\}\subseteq C^\prime(x)$, $A\neq B$,  $\lambda\in (C(x_1)\cap C(x))\setminus\{1,2\}$, and dashed edges are in $G_{C_i}\setminus G$.
     
      Finally, we color the remaining edges of $E_3(C_i)$ in $H_{C_i}$ as  same as the coloring of corresponding edges in  $G_{C_i}$.
  Note that, in lines~\ref{alg:line1-col1}~to~\ref{alg:linel-col1}, edges of $E_3(C_i,e_1)$ is colored before $E_3(C_i,e_2)$. Thus, $c_i(e_2)$  is not used for the edges in $E_3(C_i,e_1)$ and every colors of $C^\prime(x)$ is used in $E_3(C_i,e_1)$ and $E_3(C_i,e_2)$. Hence, it is easy to check that the coloring of $C_i\cup T_x\cup D$ is a star edge coloring.
  
We now, by induction on $t$,  prove that the obtained coloring is a star edge coloring of $G$. 
For $i=1$, by applying Algorithm~\ref{Coloring of UCC graph} in line~\ref{alg:linealg1} of Algorithm~\ref{Al2}, the statment is obvious.
 Now, assume that  after $(i-1)$-th round in Algorithm~\ref{Al2} there is no bi-colored path of length 4, but  bi-colored path $P:=e_1,e_2,e_3,e_4$ appears after $i$-th round.
We have two ossibilities: all edges of $P$ belong to $H_{C_i}$ or not. 
In the first case, since for each vertex $x$ of $C_i$ there is no bi-colored path in $C_i\cup T_x\cup D_x$,  two adjacent edges of $P$  belong to $C_i$ or are in the same $T_x$. Therefore, $e_2$ or $e_3$ is in $C_i$ and is colored before  $i$-th round. 
Without loss of generality, assume that $e_3\in C_i$.
If $e_1$ or $e_2$ is uncolored   before $i$-th round, then for some $x$ in $C_i$,  $e_1\in T_x$ and by coloring $T_x$ in  $i$-th round, colors of $e_1$ and $e_3$ are different,  that is a contradiction.  Thus, edges $e_1$ and $e_2$ is also colored before $i$-th round. 

Now, let $P$ has some edges out of $H_{C_i}$. In this case, because of enumaration $\sigma$,   $P$ has three colored edges before $i$-th round. 
According to given argument,  this conclude that $e_1e_2e_3$ is a bi-colored path obtained before $i$-th round.
If $e_2\not\in C_i$, then all paths of length four that contain $P$ are colored before $i$-th round  and by the induction hypothesis, $P$ is not bi-colored. Thus, $e_2\in C_i$ and is  colored before $i$-th round. Since $e_4\in E_2(C_i)\cup E_3(C_i)$ and is colored in  $i$-th round  by Algorithm~\ref{Coloring of UCC graph},  color of $e_4$ is not the same as color of $e_2$. Therefore, $P$ is not bi-colored, that again is a contradiction.
Hence, the obtained coloring in Algorithm~\ref{Coloring of cactus} is a star edge coloring of $G$. 
\end{proof}
%Since Algorithm~\ref{Coloring of UCC graph} provides a star edge coloring of  $G_{C_i}$, it is  to see that the obove coloring for every $G_{C_i}$ and all its pendant  cycles provides a star edge coloring. Thus after $t$ steps, Algorithm~\ref{Al2} give a star edge coloring of $G$, as desired.
%----------------------------------------------------------------------------------------------------------------------
%----------------------------------------------------------------------------------------------------------------------
%----------------------------------------------------------------------------------------------------------------------
%----------------------------------------------------------------------------------------------------------------------
\begin{figure}[H]
\centering
\scalebox{0.8}{
\begin{subfigure}{0.33\textwidth}
\centering
\begin{tikzpicture}
[n/.style={circle,draw=black,scale=0.7,inner sep=2pt},
nn/.style={circle,,scale=0.7,inner sep=2pt, draw=black, densely dotted},
m/.style={draw=black}
]
\node[n] (1) at (-1,0.4){};
\node[n] (2) at (1,0.4){}; 
\node[n] (3) at (0,-0.5){$x$};
\node[n] (5) at (-1.1,-1.5){$x_1$};
\node[n] (4) at (1.1,-1.5){$x_2$};
\node[nn] (555) at (-1.5,-2.5){};
%\node[nn] (55) at (-0.8,-2.5){};
\node[nn] (44) at (0.8,-2.5){};
\node[n] (444) at (1.5,-2.5){};

\draw[m](1) to[bend left=50]node[midway,yshift=7pt,xshift=-1pt] {$C_i$} (2);
\draw[m](3) to[]node[midway,yshift=7pt,xshift=-1pt] {\footnotesize{$2$}}(4);
\draw[m](3) to[]node[midway,yshift=7pt,xshift=0pt]{\footnotesize{$1$}} (5);
\draw[m](1)--(3)node[midway,above,yshift=-2pt,xshift=2pt] {\footnotesize{$ $}};
\draw[m](2)--(3)node[midway,above,yshift=-3pt,xshift=0pt] {\footnotesize{$ $}};
\draw[m](4)to[bend left=10](5)node[midway,above,yshift=-61pt,xshift=0pt]
{$A$};
\draw[m,dashed](4)to[bend right]  node[midway,left,yshift=-3pt,xshift=12pt] {\footnotesize{$A$}} (44);
%\draw[m,dashed](5)to [bend left]node[midway,right,yshift=-3pt,xshift=-12pt] {\footnotesize{$A$}} (55);
\draw[m, dashed](5) to[bend right] node[midway,right,yshift=-3pt,xshift=-15pt] {\footnotesize{$A$}} (555);
\draw[m](4) to[bend left]node[midway,yshift=0pt,xshift=5pt] {\footnotesize{1}}  (444);
\end{tikzpicture}
\caption{}\label{fig:1.1}
\end{subfigure}}
%----------------------------------------------------------------------------------------------------------------------
%----------------------------------------------------------------------------------------------------------------------
\scalebox{0.8}{
\begin{subfigure}{0.33\textwidth}
\centering
\begin{tikzpicture}
[n/.style={circle,draw=black,scale=0.7,inner sep=2pt},
nn/.style={circle,,scale=0.7,inner sep=2pt, draw=black, densely dotted},
m/.style={draw=black}
]
\node[n] (1) at (-1,0.5){};
\node[n] (2) at (1,0.5){}; 
\node[n] (3) at (0,-0.5){$x$};
\node[n] (5) at (-1,-1.5){$x_1$};
\node[n] (4) at (1,-1.5){$x_2$};
\node[n] (6) at (0,-2){};
\node[n] (7) at (2.5,-1.5){};
\node[n] (555) at (-2,-2.5){};
%\node[nn] (555) at (-2.1,-2.5){};
\node[nn] (55) at (-1,-2.5){};
\node[nn] (44) at (1,-2.5){};
\node[n] (444) at (2,-2.5){};

\draw[m](1) to[bend left=50]node[midway,yshift=7pt,xshift=-1pt] {$C_i$} (2);
\draw[m](3) to[]node[midway,yshift=7pt,xshift=-1pt] {\footnotesize{$2$}}(4);
\draw[m](3) to[]node[midway,yshift=7pt,xshift=0pt]{\footnotesize{$1$}} (5);
\draw[m](3) to[bend left]node[midway,yshift=5pt,xshift=0pt]{\footnotesize{$\lambda$}} (7);
\draw[m](1)--(3)node[midway,above,yshift=0pt,xshift=0pt] {\footnotesize{$ $}};
\draw[m](2)--(3)node[midway,above,yshift=-1pt,xshift=0pt] {\footnotesize{$ $}};
\draw[m](6)--(5)node[midway,below,yshift=2pt,xshift=-1pt] {\footnotesize{$\lambda$}};
\draw[m](6)--(4)node[midway,below,yshift=2pt,xshift=1pt] {\footnotesize{$A$}};
\draw[m,dashed](4)to[]  node[midway,right,yshift=-1pt,xshift=-2pt] {\footnotesize{$A$}} (44);
\draw[m,dashed](5)to []node[midway,left,yshift=-2pt,xshift=2pt] {\footnotesize{$\lambda$}} (55);
%\draw[m](5) to[bend right]  (555);
\draw[m](4) to[bend left]node[midway,yshift=0pt,xshift=7pt] {\footnotesize{$1$}}  (444);
\draw[m, dashed](5) to[bend right] node[midway,right,yshift=-3pt,xshift=-17pt] {\footnotesize{$A$}} (555);
\end{tikzpicture}
\caption{}\label{fig:1.2}
\end{subfigure}}
%----------------------------------------------------------------------------------------------------------------------
%----------------------------------------------------------------------------------------------------------------------
\scalebox{0.8}{
\begin{subfigure}{0.33\textwidth}
\centering
\begin{tikzpicture}
[n/.style={circle,draw=black,scale=0.7,inner sep=2pt},
nn/.style={circle,,scale=0.7,inner sep=2pt, draw=black, densely dotted},
m/.style={draw=black}
]
\node[n] (1) at (-1,0.5){};
\node[n] (2) at (1,0.5){}; 
\node[n] (3) at (0,-0.5){$x$};
\node[n] (5) at (-1,-1.5){$x_1$};
\node[n] (4) at (1,-1.5){$x_2$};
\node[n] (555) at (-2,-2.5){};
\node[n] (55) at (-.5,-2.5){};
\node[n] (44) at (.5,-2.5){};
\node[n] (444) at (2,-2.5){};
\node[nn] (d5) at (-1.3,-2.5){};
\node[nn] (d4) at (1.3,-2.5){};

\draw[m](1) to[bend left=50]node[midway,yshift=7pt,xshift=-1pt] {$C_i$} (2);
\draw[m](3) to[]node[midway,yshift=7pt,xshift=-1pt] {\footnotesize{$2$}}(4);
\draw[m](3) to[]node[midway,yshift=7pt,xshift=0pt]{\footnotesize{$1$}} (5);
\draw[m](1)--(3)node[midway,above,yshift=0pt,xshift=0pt] {\footnotesize{$ $}};
\draw[m](2)--(3)node[midway,above,yshift=-1pt,xshift=0pt] {\footnotesize{$ $}};
\draw[m](4)to[]  node[midway,left,yshift=0pt,xshift=11pt] {\footnotesize{$1$}} (44);
\draw[m](5)to []node[midway,right,yshift=0pt,xshift=-12pt] {\footnotesize{$A$}} (55);
\draw[m](5) to[bend right]node[midway,left,yshift=0pt,xshift=1pt] {\footnotesize{$B$}}  (555);
\draw[m](4) to[bend left]node[midway,right,yshift=0pt,xshift=0pt] {\footnotesize{$B$}}  (444);
\draw[m](44) to node[midway,yshift=-5pt,xshift=0pt] {\footnotesize{$B$}}  (55);
\draw[m, dashed](5) to node[midway,right,yshift=-3pt,xshift=-15pt] {\footnotesize{$A$}} (d5);
\draw[m, dashed](4) to node[midway,right,yshift=-3pt,xshift=3pt] {\footnotesize{$A$}} (d4);
\end{tikzpicture}
\caption{}\label{fig:1.3}
\end{subfigure}
}
\caption{Star edge coloring of cycle $D_x$ of length $3$, $4$, and $5$}{\label{fig2}}
\end{figure}
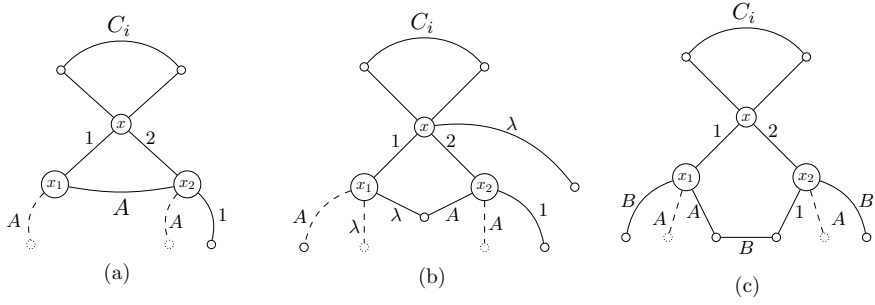
%----------------------------------------------------------------------------------------------------------------------
\begin{figure}[H]
\centering
\scalebox{0.8}{
\begin{subfigure}{0.33\textwidth}
\centering
\begin{tikzpicture}
[n/.style={circle,draw=black,scale=0.7,inner sep=2pt},
nn/.style={circle,,scale=0.7,inner sep=2pt, draw=black, densely dotted},
m/.style={draw=black}
]
\node[n] (1) at (-1,0.4){};
\node[n] (2) at (1,0.4){}; 
\node[n] (3) at (0,-0.5){$x$};
\node[n] (5) at (-1.5,-1){$x_1$};
\node[n] (4) at (1.5,-1){$x_2$};
\node[n] (55) at (-1,-2){};
\node[n] (44) at (1,-2){};
\node[n] (444) at (2.8,-2){};
\node[n] (c1) at (0,-3){};
\node[nn] (d5) at (-2,-2){};
\node[nn] (d4) at (2,-2){};

\draw[m](1) to[bend left=50]node[midway,yshift=7pt,xshift=-1pt] {$C_i$} (2);
\draw[m](3) to[]node[midway,yshift=7pt,xshift=-1pt] {\footnotesize{$2$}}(4);
\draw[m](3) to[]node[midway,yshift=7pt,xshift=0pt]{\footnotesize{$1$}} (5);
\draw[m](1)--(3)node[midway,above,yshift=0pt,xshift=0pt] {\footnotesize{$ $}};
\draw[m](2)--(3)node[midway,above,yshift=-1pt,xshift=0pt] {\footnotesize{$ $}};
\draw[m](4)to[]  node[midway,right,yshift=0pt,xshift=-1pt] {\footnotesize{$A$}} (44);
\draw[m](5)to []node[midway,left,yshift=0pt,xshift=0pt] {\footnotesize{$A$}} (55);
\draw[m](44) to node[midway,right,yshift=0pt,xshift=0pt] {\footnotesize{$1$}}(c1);
\draw[m](55) to node[midway,left,yshift=0pt,xshift=0pt] {\footnotesize{$2$}}(c1);
\draw[m](4) to[bend left]node[midway,yshift=0pt,xshift=7pt] {\footnotesize{$1$}}  (444);
\draw[m,dashed](4) to[bend left]node[midway,yshift=0pt,xshift=7pt] {\footnotesize{$A$}}  (d4);
\draw[m,dashed](5) to[bend right]node[midway,yshift=0pt,xshift=-5pt] {\footnotesize{$A$}}  (d5);
\end{tikzpicture}
\caption{}\label{fig:3.1}
\end{subfigure}}
%----------------------------------------------------------------------------------------------------------------------
%----------------------------------------------------------------------------------------------------------------------
\scalebox{0.8}{
\begin{subfigure}{0.33\textwidth}
\centering
\begin{tikzpicture}
[n/.style={circle,draw=black,scale=0.7,inner sep=2pt},
nn/.style={circle,,scale=0.7,inner sep=2pt, draw=black, densely dotted},
m/.style={draw=black}
]
\node[n] (1) at (-1,0.4){};
\node[n] (2) at (1,0.4){}; 
\node[n] (3) at (0,-0.5){$x$};
\node[n] (5) at (-1.5,-1){$x_1$};
\node[n] (4) at (1.5,-1){$x_2$};
\node[n] (55) at (-1,-2){ };
\node[n] (44) at (1,-2){ };
\node[n] (444) at (2.7,-2){ };
\node[n] (c1) at (.6,-3){};
\node[n] (c2) at (-.6,-3){};
\node[nn] (d5) at (-2,-2){};
\node[nn] (d4) at (2,-2){};

\draw[m](1) to[bend left=50]node[midway,yshift=7pt,xshift=-1pt] {$C_i$} (2);
\draw[m](3) to[]node[midway,yshift=7pt,xshift=-1pt] {\footnotesize{$2$}}(4);
\draw[m](3) to[]node[midway,yshift=7pt,xshift=0pt]{\footnotesize{$1$}} (5);
\draw[m](1)--(3)node[midway,above,yshift=0pt,xshift=0pt] {\footnotesize{$ $}};
\draw[m](2)--(3)node[midway,above,yshift=-1pt,xshift=0pt] {\footnotesize{$ $}};
\draw[m](4)to[]  node[midway,right,yshift=0pt,xshift=0pt] {\footnotesize{$1$}} (44);
\draw[m](5)to []node[midway,left,yshift=0pt,xshift=0pt] {\footnotesize{$A$}} (55);
\draw[m](44) to node[midway,right,yshift=0pt,xshift=0pt] {\footnotesize{$A$}}(c1);
\draw[m](55) to node[midway,left,yshift=0pt,xshift=0pt] {\footnotesize{$1$}}(c2);
\draw[m](4) to[bend left]node[midway,yshift=0pt,xshift=7pt] {\footnotesize{$A$}}  (444);
\draw[m](c2) to  node[midway,yshift=-7pt,xshift=0pt] {\footnotesize{$2$}}(c1);
\draw[m,dashed](4) to[bend left]node[midway,yshift=0pt,xshift=7pt] {\footnotesize{$1$}}  (d4);
\draw[m,dashed](5) to[bend right]node[midway,yshift=0pt,xshift=-5pt] {\footnotesize{$A$}}  (d5);
\end{tikzpicture}
\caption{}\label{fig:3.2}
\end{subfigure}}
%----------------------------------------------------------------------------------------------------------------------
%----------------------------------------------------------------------------------------------------------------------
\scalebox{0.8}{
\begin{subfigure}{0.33\textwidth}
\centering
\begin{tikzpicture}
[n/.style={circle,draw=black,scale=0.7,inner sep=2pt},
nn/.style={circle,,scale=0.7,inner sep=2pt, draw=black, densely dotted},
m/.style={draw=black}
]
\node[n] (1) at (-1,0.6){};
\node[n] (2) at (1,0.6){}; 
\node[n] (3) at (0,0){$x$};
\node[n] (5) at (-1.6,-0.3){$x_1$};
\node[n] (4) at (1.6,-0.3){$x_2$};
\node[n] (55) at (-1.2,-1.1){};
\node[n] (44) at (1.2,-1.1){};
\node[n] (444) at (2.7,-1.1){};
\node[n] (c1) at (.9,-1.9){};
\node[n] (c2) at (-.9,-1.9){};
\node[n] (666) at (-.6,-2.7){};
\node[n] (777) at (.6,-2.7){};
\node[n] (888) at (.3,-3.5){};
\node[n] (999) at (-.3,-3.5){};
\node[nn] (d5) at (-2,-1.2){};
\node[nn] (d4) at (2,-1.2){};

\draw[m](1) to[bend left=70]node[midway,yshift=7pt,xshift=-1pt] {$C_i$} (2);
\draw[m](3) to[]node[midway,yshift=5pt,xshift=-1pt] {\footnotesize{$2$}}(4);
\draw[m](3) to[]node[midway,yshift=5pt,xshift=0pt]{\footnotesize{$1$}} (5);
\draw[m](1)--(3)node[midway,above,yshift=0pt,xshift=0pt] {\footnotesize{$ $}};
\draw[m](2)--(3)node[midway,above,yshift=-1pt,xshift=0pt] {\footnotesize{$ $}};
\draw[m](4)to[]  node[midway,right,yshift=0pt,xshift=0pt] {\footnotesize{$1$}} (44);
\draw[m](5)to []node[midway,left,yshift=0pt,xshift=0pt] {\footnotesize{$A$}} (55);
\draw[m](44) to node[midway,right,yshift=0pt,xshift=0pt] {\footnotesize{$A$}}(c1);
\draw[m](55) to node[midway,left,yshift=0pt,xshift=0pt] {\footnotesize{$2$}}(c2);
\draw[m](4) to[bend left]node[midway,yshift=0pt,xshift=7pt] 
{\footnotesize{$A$}}  (444);
\draw[m](c1) to[]node[midway,yshift=0pt,xshift=7pt] {\footnotesize{$2$}}  (777);
\draw[m](888) to[]node[midway,yshift=-2pt,xshift=7pt] {\footnotesize{$1$}}  (777);
\draw[m](c2) to[]node[midway,left,yshift=-1pt,xshift=0pt] {\footnotesize{$1$}}  (666);
\draw[m](666) to[]node[midway,left,yshift=-2pt,xshift=0pt] 
{\footnotesize{2}}  (999);
\draw[m](888) to[]node[midway,yshift=-6pt,xshift=0pt] {\footnotesize{$A$}}  (999);
\draw[m,dashed](4) to[bend left]node[midway,yshift=0pt,xshift=7pt] {\footnotesize{$1$}}  (d4);
\draw[m,dashed](5) to[bend right]node[midway,yshift=0pt,xshift=-5pt] {\footnotesize{$A$}}  (d5);
\end{tikzpicture}
\caption{}\label{fig:3.3}
\end{subfigure}}
\caption{Star edge coloring of cycle $D_x$ of length at least $6$.}\label{fig3}
\end{figure}
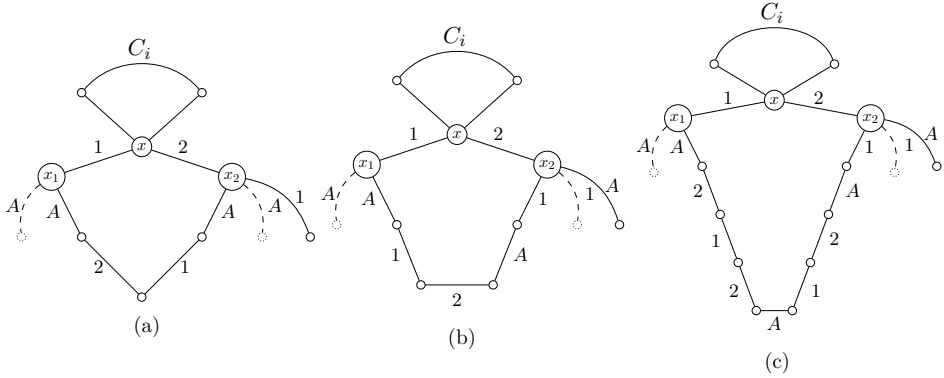
%----------------------------------------------------------------------------------------------------------------------
%----------------------------------------------------------------------------------------------------------------------
\section{Tightness of the bound}\label{tightness}
In this section, we prove that the given bound in Theorem~\ref{thm:main-result1} is tight and there are infinite family of Cactus graphs that achive the bound $\left\lfloor\frac{3\Delta}{2}\right\rfloor+1$.
First we need to see some facts about the star edge coloring of trees.

\begin{lem}\label{facts}
Let $\Delta$ be an odd positive integer and $T_v$ be a $\Delta$-semiregular rooted tree of height  two. In every   star edge coloring $c$  of $T_v$ with color set  $\mathcal{C}=\{1,\ldots,\left\lfloor\frac{3\Delta}{2}\right\rfloor\}$ and   for every two neighbours  $x$ and $y$  of root~$v$, we have
 \begin{itemize}
\item[\rm{(a)}]
$|C(x)\cap C(y)|\geq \frac{\Delta+1}{2}$, $C^\prime(v)=\mathcal{C}\setminus C(v)\subset C(x)$.
\item[\rm{(b)}]
  Every color of $C(v)$ is used  for  $\frac{\Delta-1}{2}$ non-incident edges to  $v$.
\item[\rm{(c)}]
$c(vx)\in C(y)$ or $c(vy)\in C(x)$.
\end{itemize}
\end{lem}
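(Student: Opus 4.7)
The plan is to extract all three parts from a single tight double count on the edges of $T_v$, together with a short forcing argument for (c). Set $k=\lfloor 3\Delta/2\rfloor=(3\Delta-1)/2$, so $|C'(v)|=(\Delta-1)/2$, and note that $T_v$ has $\Delta+\Delta(\Delta-1)=\Delta^2$ edges.

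For each colour $\alpha$ I would bound the number $n_\alpha$ of edges receiving colour $\alpha$. If $\alpha=c(vx_0)\in C(v)$, then for any child $y\ne x_0$ of $v$ carrying a leaf edge of colour $\alpha$, the 4-path $\ell_{x_0}-x_0-v-y-\ell_y$ must not be bi-coloured, forcing $c(vy)\notin C(x_0)$; combined with $c(vy)\ne c(vx_0)$ this places $c(vy)$ in $C(v)\cap C'(x_0)$. Since distinct such $y$'s have distinct $c(vy)$'s and $|C(v)\cap C'(x_0)|\le|C'(x_0)|=(\Delta-1)/2$, we get $n_\alpha\le 1+(\Delta-1)/2=(\Delta+1)/2$. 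If $\alpha\in C'(v)$, then $\alpha$ can occur only on leaf edges and at most once per child, so $n_\alpha\le\Delta$. Summing,
\[
\sum_\alpha n_\alpha\le \Delta\cdot\tfrac{\Delta+1}{2}+\tfrac{\Delta-1}{2}\cdot\Delta=\Delta^2=|E(T_v)|,
\]
so every one of the inequalities above must be an equality.

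This tightness immediately yields part (b): each $\alpha\in C(v)$ lies on exactly $(\Delta-1)/2$ edges not incident to $v$. It also gives the containment $C'(v)\subseteq C(x)$ in (a) in two complementary ways — each $\alpha\in C'(v)$ now satisfies $n_\alpha=\Delta$, so every child of $v$ (including $x$) has a leaf edge of colour $\alpha$; and for each child $x_0$, equality forces $|C(v)\cap C'(x_0)|=(\Delta-1)/2=|C'(x_0)|$, hence $C'(x_0)\subseteq C(v)$ and dually $C'(v)\subseteq C(x_0)$. For (c) I would argue by contradiction: assume $c(vx)=\alpha\notin C(y)$ and $c(vy)=\beta\notin C(x)$, so $\alpha\in C'(y)$ and $\beta\in C'(x)$. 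Applying (b) to $\beta$, its $(\Delta-1)/2$ non-$v$-incident occurrences sit at precisely the children $y'$ with $c(vy')\in C(v)\cap C'(y)=C'(y)$, a set of size exactly $(\Delta-1)/2$, so every such $y'$ in fact carries a leaf edge of colour $\beta$. But $c(vx)=\alpha\in C'(y)$ means $x$ is one of them, so $\beta\in C(x)$, contradiction. The remaining intersection bound in (a) is then automatic: $C'(v)\subseteq C(x)\cap C(y)$ already contributes $(\Delta-1)/2$ colours, and (c) adds one further colour from $C(v)$, giving $|C(x)\cap C(y)|\ge(\Delta+1)/2$.

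The main obstacle is really just recognising that the per-colour upper bounds sum to exactly $\Delta^2$; once this tight double count is in place, (b), the containment half of (a), (c), and the intersection half of (a) fall out in sequence with no further computation.
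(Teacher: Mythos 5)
Your proposal is correct and takes essentially the same approach as the paper: the identical tight double count (colours of $C(v)$ bounded via the forced condition $c(vy)\in C'(x_0)$ when $\alpha=c(vx_0)$ appears at a child $y$, colours of $C'(v)$ bounded by $\Delta$, with the bounds summing exactly to the number of edges) forces the equalities giving (b) and the containment in (a), and your argument for (c) is the same forcing idea combined with (a) and (b). The only cosmetic differences are that you count all $\Delta^2$ edges instead of the $\Delta(\Delta-1)$ edges not incident to $v$, and you deduce $|C(x)\cap C(y)|\geq\frac{\Delta+1}{2}$ from the containment together with (c), whereas the paper gets it in one line from $|C(x)|+|C(y)|-|\mathcal{C}|$.
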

\begin{proof}
{
It is clear that
 \[|C(x)\cap C(y)|\geq 2\Delta-\left\lfloor\frac{3\Delta}{2}\right\rfloor=\frac{\Delta+1}{2}.\] 
 Let $n_i$ be  the numbers of edges colored with $i$  in $T_v\setminus \{v\}$. Obviously, 
 $C(x)\cap C^\prime(v)\leq\frac{\Delta-1}{2}$.
 Moreover, for every   two neighbours $x$ and $y$ of $v$, if color $c(vx)$
is used for some  incident edges to $y$, then color $c(vy)$ is not used for the  incident edges to $x$, and vice versa. Thus,  for each color $i\in C(v)$,  we have $n_i\leq  \frac{\Delta-1}{2}$. Also,  each color of $C^\prime(v)$ can be used for the incident edges of each neighbour of $v$.   Since  $T_v$ has $\Delta(\Delta-1)$ leaves,  we must have
\[\Delta(\Delta-1)=\sum_{i\in C(v)}{n_i}+\sum_{i\in C^\prime(v)}{n_i}\leq \sum_{i\in C(v)}{\frac{\Delta-1}{2}}+\sum_{i\in C^\prime(v)}{\Delta}= \Delta(\Delta-1), \] which implies that  if $i \in C^{\prime}(v)$, then $n_i=\Delta$ and proves (a). Moreover, if $ i \in C(v)$, then $n_i=\frac{\Delta-1}{2}$, that proves (b).

To prove (c), by contrary, suppose that  $c(vx)\not\in C(y)$ and $c(vy)\not\in C(x)$. Then, colors $c(vx)$ and $c(vy)$ can be used for at most
 $(\Delta-2)-\frac{\Delta-1}{2}<\frac{\Delta-1}{2}$ non-incident edges to $v$, that  contradicts~(b). 
}
\end{proof}
\begin{thm}\label{thm:main-result}
For  every odd positive integer $\Delta\geq 3$, there exists Cactus $G$ with maximum degree~$\Delta$, in which $\chi^\prime_s(G)=\left\lfloor\frac{3\Delta}{2}\right\rfloor+1$.
\end{thm}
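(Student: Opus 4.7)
The plan is to exhibit, for each odd $\Delta\ge 3$, an explicit cactus $G_\Delta$ with maximum degree $\Delta$ whose star chromatic index cannot be less than $\left\lfloor\frac{3\Delta}{2}\right\rfloor+1$; combined with Theorem~\ref{thm:main-result1} this yields equality. The construction I would use is built around the tight tree from Lemma~\ref{facts}. Start with a $\Delta$-semiregular rooted tree $T_v$ of height two: root $v$, children $x_1,\dots,x_\Delta$, and leaves $\ell_{i,j}$ for $1\le i\le \Delta$ and $1\le j\le\Delta-1$. To turn $T_v$ into a cactus that forces an extra color, I would close a short cycle through two leaves in different branches, e.g.\ pick $\ell_{1,1}$ and $\ell_{2,1}$ and connect them by an auxiliary path of length two or three (one or two new vertices), then attach pendant leaves everywhere so that every non-leaf vertex has degree exactly $\Delta$. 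The resulting $G_\Delta$ is a $\Delta$-semiregular cactus whose unique non-trivial cycle runs $v-x_1-\ell_{1,1}-\cdots-\ell_{2,1}-x_2-v$.

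For the lower bound I would argue by contradiction. Suppose $G_\Delta$ admits a star edge coloring $c$ with the color set $\mathcal{C}=\{1,\dots,\left\lfloor\frac{3\Delta}{2}\right\rfloor\}$. Since the restriction of a star edge coloring to a subgraph is still a star edge coloring, $c$ restricted to $E(T_v)$ is a star edge coloring of $T_v$, so Lemma~\ref{facts} applies verbatim. Part~(a) then forces $C'(v)\subseteq C(x_i)$ for every child $x_i$ of $v$; part~(b) forces every color of $C(v)$ to appear on exactly $\frac{\Delta-1}{2}$ leaf-edges of $T_v$; and part~(c) forces, for every pair of children $x_i,x_j$, at least one of $c(vx_i)\in C(x_j)$ or $c(vx_j)\in C(x_i)$. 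These three rigid constraints essentially determine every color class of $c\big|_{T_v}$ up to permutations of $\mathcal{C}$; in particular they pin down the color $c(x_1\ell_{1,1})$ and the freedom available to the new edge incident to $\ell_{1,1}$ (and symmetrically at $\ell_{2,1}$).

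I would then finish by analyzing the short cycle added to build $G_\Delta$. The attachment edges at $\ell_{1,1}$ and $\ell_{2,1}$ must avoid all colors already present at those vertices, and the counting in Lemma~\ref{facts}(b) shows that the $\frac{\Delta-1}{2}$ "free" colors at each $\ell_{i,j}$ are already committed elsewhere on $T_v$; the remaining edges of the auxiliary path inherit parallel restrictions. A case check on the (very few) admissible color patterns would then produce a bi-coloured 4-edge sub-path inside the cycle $\ell_{1,1}x_1vx_2\ell_{2,1}\dots\ell_{1,1}$, contradicting the star property. Hence $\chi'_s(G_\Delta)\ge\left\lfloor\frac{3\Delta}{2}\right\rfloor+1$, and together with Theorem~\ref{thm:main-result1} equality follows.

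The main obstacle is precisely this final case analysis: one must verify that for \emph{every} consistent extension of the forced tree-coloring to the two attachment edges and the auxiliary path, a bi-coloured 4-path is unavoidable. Tuning the length of the added path is the lever that makes the casework go through, and the driving force is Lemma~\ref{facts}(b): the very few colors available on leaf-edges are all already used to realize the $(\Delta-1)/2$ quota, so there is no slack to break a would-be bi-coloured 4-path on the cycle. Once this step is settled, Theorem~\ref{thm:main-result} follows.
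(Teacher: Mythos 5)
Your proposal is not a proof of the lower bound; it is a plan whose decisive step is explicitly left open. The entire content of the theorem is showing $\chi'_s(G_\Delta)\geq\left\lfloor\frac{3\Delta}{2}\right\rfloor+1$, and you defer exactly that ("the main obstacle is precisely this final case analysis"), with only a heuristic reason to believe it works. Moreover the heuristic is shaky. Lemma~\ref{facts} constrains the restriction of the coloring to the height-two tree $T_v$: part (a) locates $C'(v)$ inside each $C(x_i)$, part (b) counts how often colors of $C(v)$ occur on the leaf edges of $T_v$, part (c) is a pairwise condition at the children of $v$. None of this "essentially determines every color class up to permutation", and part (b) says nothing about which colors may be placed on the \emph{new} pendant and cycle edges at $\ell_{1,1}$ and $\ell_{2,1}$: a color being used to fill its $(\Delta-1)/2$ quota elsewhere on $T_v$ does not forbid its reuse at $\ell_{1,1}$, since the star condition only excludes bi-colored paths of length four, not repetitions at distance. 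At a leaf $\ell_{1,1}$ there remain roughly $\lfloor\Delta/2\rfloor$ colors outside $C(x_1)$, plus further colors subject only to local 4-path checks, so there is genuine slack; and your added cycle has length at least six with its far edges two or more steps away from where the lemma bites, which makes it easier, not harder, to extend the coloring. As it stands there is no reason to expect every extension to fail, and the construction may well be colorable with $\left\lfloor\frac{3\Delta}{2}\right\rfloor$ colors.

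Compare this with what the paper does: it starts from a $\Delta$-semiregular tree of height \emph{three}, adds the single edge $xy$ between two neighbours $x,y$ of the root $v$ (so the unique cycle is the triangle $vxy$), and deletes one child subtree at each of $x$ and $y$ to keep all degrees equal to $\Delta$. Any star coloring $\phi$ of the cactus with $\left\lfloor\frac{3\Delta}{2}\right\rfloor$ colors is pulled back to a star coloring of the tree by reinstating the deleted subtrees with colors copied from the opposite branch, and then Lemma~\ref{facts} is applied not only at $v$ but at $x$ and $y$, which are themselves roots of $\Delta$-semiregular height-two subtrees. The punchline is a counting argument on the color $\phi(xy)$: using (c) one gets $\phi(xy)\notin C(v)$, using (a) one gets $|C(x)\cap C(y)|\geq\frac{\Delta+1}{2}$, and then $\phi(xy)$ can occur on fewer than $\frac{\Delta-1}{2}$ edges at the neighbours of $x$, contradicting (b) for $T_x$. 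The two features your construction lacks are exactly the ones that make this work: the cycle must pass through vertices that root $\Delta$-semiregular height-two subtrees (so the lemma gives leverage at the attachment points, which your leaves $\ell_{i,j}$ with pendant edges do not), and the cycle must be short (a triangle through $v$), so the extra edge's color is squeezed between the quotas of (b) and the intersection bound of (a). To repair your argument you would either have to prove a new rigidity statement at the leaves (which Lemma~\ref{facts} does not provide) or change the construction along the paper's lines.
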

\begin{proof}
{
For every positive odd integer $\Delta$, we construct a Cactus that achives the bound. 
 For this purpose, let $T_v$ be a $\Delta$-semiregular tree of height three with root $v$, and  $x$ and $y$ are two neighbours of $v$. For every vertex $u\neq v$ in $T_v$, we denote the neighbours of $u$ by $\{u_0,u_1,\ldots,u_{\Delta-1}\}$, where $u_0$ is the parent of $u$.
 
 We construct Cactus  $G_T$ from $T_v$  by adding edge $xy$ and removing vertices $x^{\prime}=x_{\Delta-1}$ and $y^{\prime}=y_{\Delta-1}$.
%%  We denote the  neighbours  of $x^{\prime},y^{\prime}$ in the last level of $T_v$ by $\{x^{\prime}_1, \ldots, x^{\prime}_{\Delta-1}\}$ and $\{y^{\prime}_1, \ldots, y^{\prime}_{\Delta-1}\}$, respectively.
 Obviously, if $G_T$ admits a star edge coloring $\phi$ with $\left\lfloor\frac{3\Delta}{2}\right\rfloor$ colors, then $T_v$ has  star edge coloring $c$ with  $\left\lfloor\frac{3\Delta}{2}\right\rfloor$ colors, where
 \[ c(e)=\begin{cases}
\phi(e)~~&\text{if}~e\in G_T,
\\
 \phi(xy)~~& \text{if}~e=xx^{\prime} \ or \ e= yy^{\prime},
 \\
 \phi(xx_{i-1})~~& \text{if}~e=y^{\prime}y^{\prime}_i, \ \ \  1 \leq  i \leq \Delta-1,
 \\
  \phi(yy_{i-1})~~& \text{if}~e=x^{\prime}x^{\prime}_i, \ \ \ 1 \leq  i \leq \Delta-1.
\end{cases}
 \]
By  Lemma~\ref{facts}(c), $\phi(vx)\not\in C(y)$ or $\phi(vy)\not\in C(x)$, hence $\phi(xy)\not\in C(v)$.
 Moreover, without loss of generality, assume that $\phi(vy)\in C(x)$. 
  Lemma~\ref{facts}(a),  implies that $C(x) \cap C(y)=\frac{\Delta+1}{2}$. 
 Therefore, color $\phi(xy)$ can be used for at most  $(\Delta-1)-\frac{\Delta+1}{2}<\frac{\Delta-1}{2}$ incident edges to the neighbours of $x$, except  $v$.  That contradicts   Lemma~\ref{facts}(b) for $\Delta$-semiregular subtree $T_x$ in $T_v$. Therefore, 
 $\chi^\prime_s(G_T)\geq \left\lfloor\frac{3\Delta}{2}\right\rfloor+1$. Hence,  by Theorem~\ref{thm:main-result1} we have $\chi^\prime_s(G_T)=\left\lfloor\frac{3\Delta}{2}\right\rfloor+1$.
 }
\end{proof}
%----------------------------------------------------------------------------------------------------------------------
%----------------------------------------------------------------------------------------------------------------------
\begin{thm}\label{lem:6-regular}
There exist  Cactus $G$ with maximum degree 
$6$, where $\chi_{s}^{\prime}(G)=10$
\end{thm}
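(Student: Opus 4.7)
The upper bound $\chi'_s(G)\le 10$ for every cactus of maximum degree $6$ is immediate from Theorem~\ref{thm:main-result1}, since $\lfloor 3\cdot 6/2\rfloor+1=10$. Thus the entire content of the statement lies in exhibiting a particular cactus $G$ with $\Delta(G)=6$ for which no star edge coloring on a palette of $9$ colors exists, and my plan is to construct such a $G$ explicitly and rule out $9$-colorings.

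The natural candidate is the analog, in the even case, of the graph $G_T$ built in the proof of Theorem~\ref{thm:main-result}: start from a $6$-semiregular rooted tree $T_v$ of height three, pick two children $x,y$ of the root $v$, add the chord $xy$, and delete one leaf at $x$ and one at $y$ so that $\Delta=6$ is preserved; the resulting graph is a cactus whose only cycle is the triangle $vxy$. Suppose for contradiction that it admits a star edge coloring $\phi$ with $\mathcal C=\{1,\dots,9\}$. The first step is to establish an analog of Lemma~\ref{facts} valid for $\Delta=6$ and $9$ colors applied to any $6$-semiregular subtree of height two rooted at a vertex $w$ of $T_v$: writing $n_i$ for the number of leaf edges colored $i$, the bi-colored $P_4$ argument still yields, for each pair of children $x',y'$ of $w$, that not both $\phi(wx')\in C(y')$ and $\phi(wy')\in C(x')$ can hold; hence $\sum_{i\in C(w)}n_i\le \binom{6}{2}=15$, while $\sum_i n_i = 6\cdot 5=30$, so every color in $C'(w):=\mathcal C\setminus C(w)$ is used at least $5$ times (out of a possible $6$) among the leaves, and each color of $C(w)$ at most a bounded number of times.

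With this structural lemma in hand I would finish exactly as in Theorem~\ref{thm:main-result}. Properness forces $\phi(xy)\notin C(x)\cup C(y)$; applying the lemma at $v$ combined with the pair constraint on $\{x,y\}$ pushes $\phi(xy)$ into $C'(v)$ and forces, up to symmetry, $\phi(vy)\in C(x)$, which saturates $|C(x)\cap C(y)|$ at its lower bound. Then applying the lemma at the subtree of height two rooted at $x$ forces the color $\phi(xy)\in C(x)$ to occur at the grandchildren of $x$ too few times to be consistent with the leaf-count equality, producing the contradiction. The main obstacle I anticipate is precisely that the even-$\Delta$ version of Lemma~\ref{facts}(b) is slacker than its odd counterpart, so a single chord $xy$ may not by itself exhaust the $C'(v)$-budget. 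If that happens, my backup plan is to reinforce the gadget by adding a second chord between another pair of children of $v$ (or by attaching a second copy of $T_v$ at $x$), so that two chord colors must simultaneously come from the three-element set $C'(v)$; the resulting double constraint should push the counting strictly past the bound and close the contradiction.
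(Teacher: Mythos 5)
Your proposal takes a genuinely different route from the paper, but it has a real gap: the even-$\Delta$ analog of Lemma~\ref{facts} that your whole plan rests on does not hold, and your own counting already shows why. For $\Delta=6$ and $9$ colors the pair constraint gives $\sum_{i\in C(w)}n_i\le\binom{6}{2}=15$, and each of the $|C'(w)|=3$ colors can occur at most once per child, so $\sum_{i\in C'(w)}n_i\le 18$; the total capacity is $15+18=33$ against the $30$ leaf edges that must be colored. This slack of $3$ means none of the rigidity of Lemma~\ref{facts} survives: a color of $C'(w)$ is only forced to appear at least $3$ times (not ``at least $5$'' as you assert --- that is an average, not a minimum), conclusion (a) ($C'(v)\subset C(x)$ for every child $x$) can fail, conclusion (b) gives no exact per-color counts, and conclusion (c) --- the pair property that drives the contradiction in Theorem~\ref{thm:main-result} --- is no longer forced. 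Consequently the final step ``finish exactly as in Theorem~\ref{thm:main-result}'' has nothing to contradict: you cannot conclude $\phi(xy)\notin C(v)$, you cannot saturate $|C(x)\cap C(y)|$, and the count of occurrences of $\phi(xy)$ below the children of $x$ is no longer pinned down. In fact it is not established (and is doubtful) that your gadget --- a $6$-semiregular tree of height three with a single chord $xy$ --- needs $10$ colors at all; the burden of exhibiting a graph with no $9$-star-edge-coloring is exactly the content of the theorem, and your backup plan (a second chord, or a second copy of $T_v$) is a speculative sketch, not an argument.

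For comparison, the paper does not try to salvage the odd-$\Delta$ counting. It exhibits a concrete cactus (Figure~\ref{fig:10}) built from a triangle $x_1x_2x_3$ with full stars and height-two trees attached and, crucially, a $4$-cycle $x_3y_1y_2y_3$ hanging at one triangle vertex with further full neighborhoods at $y_1,y_2,y_3$; it then runs a direct case analysis on a hypothetical $9$-coloring: after normalizing the triangle colors to $1,2,3$, pigeonhole on $E_2(C,x_i)$ localizes the argument at one vertex $x_3$, and the interaction between the colors available at $y_1,y_3$ (only $\mathcal{C}\setminus C(x_3)$ plus one triangle color each) and the edges of the attached stars eliminates every choice for $c(y_1y_2),c(y_2y_3)$. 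The upper bound $\chi'_s(G)\le 10$ is, as you say, immediate from Theorem~\ref{thm:main-result1}; but for the lower bound you would need either to carry out an explicit exhaustive analysis of a specific graph (as the paper does) or to prove a genuinely new even-$\Delta$ structural lemma strong enough to replace Lemma~\ref{facts}, neither of which your proposal supplies.
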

\begin{proof}
%Let $G$ be  the Cactus shown in Figure \ref{fig:10}. We show that $G$ has no star edge coloring with $9$ colors. Consider cycle $C:=x_1x_2x_3$ colored with colors $1,2$ and $3$.   Then, we color the edges $E_2(C)$ with colors  $\{1, \ldots, 9\}$ such that there is no bi-colors path of length four. It can be seen that there are two colors $a,b \in \{4, \ldots, 9\}$, say $4,5$, such that $a,b$ are used to color $E_2(C)$ two times.  Let $D$ be cycle $x_3y_1y_2y_3$.

Let $G$ be the Cactus shown in Figure \ref{fig:10}. We show that $G$ has no star edge coloring with color set $\mathcal{C}=\{1, \ldots,9\}$. Assume that $C:=x_1x_2x_3$ is the cycle of length three in $G$ colored by $1,2$ and $3$.
In every star edge coloring $E_1(C)\cup E_2(C)$ with colors $\mathcal{C}$, there is vertex $x_i$ that the color set of edges in $E_2(C,x_i)$ is a subset of $\{4,5,6,7,8\}$ and the colors of at least  two edges in $E_2(C,x_i)$ is used at least two times in the color set of edges in $E_2(C)$.
Without loss of generality, assume that $i=3$ and $C(x_3)=\{1,3,4,5,6,7\}$. Let $D:=x_3y_1y_2y_3$ and $c(x_3y_1)=4$, $c(x_3y_3)=5$.
 The common  possible colors that we can use for edges incident to  $y_1$ and $y_3$ are $\{2,8,9\}= \mathcal{C} \setminus C(x_3)$, we color three edges in  $E_2(D,y_1)$ and $E_2(D,y_3)$ with these colors. On the other hand color $3$ and $1$ are usable for coloring an edge in $E_2(D,y_1)$ and   $E_2(D,y_3)$, respectively.  If we set $c(y_1y_2)=A \in \{2,8,9\}$ and $c(y_2y_3)=B\in \{2,8,9\} $ or we set $c(y_1y_2)=A \in \{2,8,9\}$ and $c(y_2y_3)=4$, then we have bi-colored path of length four in $G$. Otherwise, we must have $c(y_1y_2)=A \in \{2,8,9\} $ and $c(y_2y_3)=1$ (or $c(y_1y_2)=3$ and $c(y_2y_3)=A$). 

Now we show that this is also impossible.  
 For this purpose, let $A=8$ and color the edges in $E_3(D,y_1z_i)$, $ 1\leq i \leq 4$, with colors $\{1, \ldots, 9\} \setminus \{8\}$ such that there is no bi-colored path of length four.  First,  color one edge in $E_2(D,y_2)$ with color $7$. According to colors of $E_3(D,y_1z_1)$, we can color two edge of $E_2(D,y_2)$ with $2$ and $9$.
On the other hand, the only possible edges that can be colored with color $1$ is one edge in $E_3(D, y_3z_{12})$. This implies that there is no possible color to color edge $y_2z_8$ with a color in the color set of edges in $E_2(D,y_3)\cup E_3(D,y_3z_9)\cup E_3(D,y_3z_10)$. By a similar discussion it can be shown that if we set
 $c(y_1y_2)=3$ and $c(y_2y_3)=1$
 or 
 $c(y_1y_2)=6$ and $c(y_2y_3)=1$ 
 or
$c(y_1y_2)=6$ and $c(y_2y_3)=A$, 
 then coloring of $G$ with $9$ colors is impossible.
 \end{proof}
%----------------------------------------------------------------------------------------------------------------------

\begin{con} For every even integer $\Delta\geq 6$, there exists $\Delta$-semiregular Cactus  $G$ with  star chromatic index $\left\lfloor\frac{3\Delta}{2}\right\rfloor+1$.
\end{con}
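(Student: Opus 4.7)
The plan is to generalize the $\Delta = 6$ construction from Theorem~\ref{lem:6-regular} to all even $\Delta \geq 6$. The proposed witness is a Cactus whose central block is a triangle $C = x_1x_2x_3$, with a $\Delta$-semiregular rooted tree $T_{x_i}$ of height two attached at each~$x_i$; if necessary, the construction will be enlarged by further layers or additional triangles attached to the $x_i$ to saturate the counting arguments below.

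The first step is to prove an analogue of Lemma~\ref{facts} for even $\Delta$. If a $\Delta$-semiregular rooted tree $T_v$ of height two admits a star edge coloring with $\frac{3\Delta}{2}$ colors, then for every pair of neighbors $x,y$ of $v$ we still have $|C(x) \cap C(y)| \geq \frac{\Delta}{2}$, and for each $i \in C(v)$ the number $n_i$ of edges colored $i$ non-incident to $v$ is at most $\frac{\Delta}{2}$. In contrast to the odd case the counting identity $\sum_i n_i = \Delta(\Delta-1)$ now has slack $\Delta$, so statements (a)--(c) of Lemma~\ref{facts} cannot all hold simultaneously. I would therefore classify the possible partial colorings of $T_v$ according to which colors of $C(v)$ attain the upper bound $\frac{\Delta}{2}$ and which are underused, and then show that in either case the palette available at the second level is too constrained.

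Second, assuming a putative star edge coloring $\phi$ of $G$ with $\frac{3\Delta}{2}$ colors and normalizing so that the triangle $C$ is colored by $\{1,2,3\}$, I would apply the generalized lemma at each $x_i$. A pigeonhole argument on the $\Delta-2$ edges of $E_2(C,x_i)$ would then force some vertex, say $x_3$, to have palette $C(x_3) = \{1,3\} \cup S$ with $|S| = \Delta - 2$, and to have at least two colors of $S$ repeated among $E_2(C,x_1) \cup E_2(C,x_2)$. Following the argument in Theorem~\ref{lem:6-regular}, I would then trace a length-three path from $x_3$ into $T_{x_3}$: at each step the available palette shrinks because colors that appeared at the previous step cannot be reused without creating a bi-colored path of length four. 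The objective is to show that for any such choice of $\phi$ no legal completion of the coloring at the leaves exists.

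The main obstacle will be the case analysis. For $\Delta = 6$ there are only finitely many local color patterns to check, but for larger even $\Delta$ the slack of $\Delta$ in the counting allows individual colors in $C(v)$ to be underused in many different ways, multiplying the number of cases. A uniform argument will probably require enlarging the base block, for example by replacing the triangle by a triangle sharing a vertex with a longer cycle, so that the pigeonhole step becomes quantitatively sharper and compensates for the weaker per-vertex constraints available in the even case. Designing this base block and verifying that the resulting local constraints are genuinely incompatible with any star edge coloring using only $\frac{3\Delta}{2}$ colors is where the real difficulty lies.
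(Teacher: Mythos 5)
Note first that the statement you are addressing is stated in the paper as a \emph{conjecture}: the paper proves only the single case $\Delta=6$ (Theorem~\ref{lem:6-regular}), by an ad hoc analysis of the one specific graph in Figure~\ref{fig:10}, and offers no argument for general even $\Delta$. So your submission should be judged as an attempt at an open problem, and as it stands it is a research plan rather than a proof: every step that would carry the actual load is left unresolved. The central difficulty, which you correctly identify but do not overcome, is that the proof of Lemma~\ref{facts} is a double count that is \emph{tight} only for odd $\Delta$: there one has $\Delta(\Delta-1)=\sum_i n_i$ with each $n_i\leq\frac{\Delta-1}{2}$ for $i\in C(v)$ and $n_i\leq\Delta$ for $i\in C^{\prime}(v)$, and equality forces (a), (b), (c) simultaneously. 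For even $\Delta$ the same bounds give $\sum_i n_i\leq \frac{\Delta}{2}\cdot\Delta+\frac{\Delta}{2}\cdot\Delta=\Delta^2$, leaving slack $\Delta$, so none of (a)--(c) follows, and it is exactly these three facts that drive the contradiction in Theorem~\ref{thm:main-result}. Your proposed remedy --- ``classify the partial colorings according to which colors attain the bound and show the second-level palette is too constrained'' --- is not carried out, and it is not evident that any usable structural statement survives the slack; indeed the paper's authors evidently could not find one, which is why the even case beyond $\Delta=6$ is left as a conjecture.

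The later steps have the same status. The pigeonhole claim that some $x_i$ must have $C(x_i)=\{1,3\}\cup S$ with two colors of $S$ repeated in $E_2(C)$ depends quantitatively on how many pendant edges and trees are attached at each $x_i$; you have not specified the witness graph for general even $\Delta$ (you even allow yourself to ``enlarge the construction if necessary''), so the claim cannot be checked. Finally, the non-extendability at the leaves --- the step ``no legal completion of the coloring exists'' --- is precisely the part of Theorem~\ref{lem:6-regular} that is done by exhaustive case checking tailored to $\Delta=6$ and nine colors (tracing the colors $2,8,9$, the forced color $1$ on $y_2y_3$, and the impossibility at $y_2z_8$); nothing in your sketch replaces this finite check by an argument that scales with $\Delta$. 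In short, you have a plausible outline of where a proof might come from, and a correct diagnosis of why the odd-case method breaks, but no construction, no even-$\Delta$ substitute for Lemma~\ref{facts}, and no completion argument; the conjecture remains open after your attempt, just as it does in the paper.
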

%----------------------------------------------------------------------------------------------------------------------
\begin{figure}[H]
\centering
\scalebox{0.8}{
\begin{tikzpicture}
[n/.style={circle,draw=black,scale=1.1,inner sep=1pt},
nn/.style={circle,draw=yellow,scale=1.15,inner sep=2pt},
nnn/.style={circle,draw=white,scale=1,inner sep=1pt},
m/.style={draw=black}]

		\node [n] (0) at (-1.25, 1.5) {$x_1$};
		\node [n] (1) at (1.25, 1.5) {$x_2$};
		\node [n] (2) at (0, -0.75) {$x_3$};
		\node [n] (3) at (-1.25, -2.75) {$y_1$};
		\node [n] (4) at (1.25, -2.75) {$y_3$};
		\node [n] (5) at (0, -4.75) {$y_2$};
		\node [n] (6) at (-2.5, 2.25) {$ $};
		\node [n] (7) at (-2, 2.75) {$ $};
		\node [n] (8) at (-1.25, 3) {$ $};
		\node [n] (9) at (-0.5, 3) {$ $};
		\node [n] (10) at (0.5, 3) {$ $};
		\node [n] (11) at (1.25, 3) {$ $};
		\node [n] (12) at (2, 2.75) {$ $};
		\node [n] (13) at (2.5, 2.25) {$ $};
		\node [n] (14) at (1.75, -0.75) {$ $};
		\node [n] (15) at (-1.75, -0.75) {$ $};
		\node [n] (16) at (3, -1) {$z_9$};
		\node [n] (17) at (3.25, -2) {$z_{10}$};
		\node [n] (18) at (3.25, -3) {$z_{11}$};
		\node [n] (19) at (3, -4) {$z_{12}$};
		\node [n] (20) at (-1.25, -6.25) {$z_5$};
		\node [n] (21) at (-0.5, -6.5) {$z_6$};
		\node [n] (22) at (0.5, -6.5) {$z_7$};
		\node [n] (23) at (1.25, -6.25) {$z_8$};
		\node [n] (24) at (-3, -1) {$z_1$};
		\node [n] (25) at (-3.25, -2) {$z_2$};
		\node [n] (26) at (-3.25, -3) {$z_3$};
		\node [n] (27) at (-2.75, -4) {$z_4$};
		\node [n] (28) at (-3, 1) {$ $};
		\node [n] (29) at (-3.75, 1) {$ $};
		\node [n] (30) at (-4.5, 1) {$ $};
		\node [n] (31) at (-5.25, 1) {$ $};
		\node [n] (32) at (-2.25, 1) {$ $};
		\node [n] (33) at (-5.6, 0.7) {$ $};
		\node [n] (34) at (-6, 0.25) {$ $};
		\node [n] (35) at (-6, -0.5) {$ $};
		\node [n] (36) at (-6, -1.25) {$ $};
		\node [n] (37) at (-6, -2) {$ $};
		\node [n] (38) at (-6, -3) {$ $};
		\node [n] (39) at (-6, -3.75) {$ $};
		\node [n] (40) at (-6, -4.5) {$ $};
		\node [n] (41) at (-6, -5.25) {$ $};
		\node [n] (42) at (-5.6, -5.7) {$ $};
		\node [n] (43) at (-5.25, -6) {$ $};
		\node [n] (44) at (-4.5, -6) {$ $};
		\node [n] (45) at (-3.75, -6) {$ $};
		\node [n] (46) at (-3, -6) {$ $};
		\node [n] (47) at (-2.25, -6) {$ $};
		\node [n] (48) at (2.25, 1) {$ $};
		\node [n] (49) at (3, 1) {$ $};
		\node [n] (50) at (3.75, 1) {$ $};
		\node [n] (51) at (4.5, 1) {$ $};
		\node [n] (52) at (5.25, 1) {$ $};
		\node [n] (53) at (5.6, 0.7) {$ $};
		\node [n] (54) at (6, 0.25) {$ $};
		\node [n] (55) at (6, -0.5) {$ $};
		\node [n] (56) at (6, -1.25) {$ $};
		\node [n] (57) at (6, -2) {$ $};
		\node [n] (58) at (6, -3) {$ $};
		\node [n] (59) at (6, -3.75) {$ $};
		\node [n] (60) at (6, -4.5) {$ $};
		\node [n] (61) at (6, -5.25) {$ $};
		\node [n] (62) at (5.6, -5.7) {$ $};
		\node [n] (63) at (5.25, -6) {$ $};
		\node [n] (64) at (4.5, -6) {$ $};
		\node [n] (65) at (3.75, -6) {$ $};
		\node [n] (66) at (3, -6) {$ $};
		\node [n] (67) at (2.25, -6) {$ $};

\draw [m](9)to[]node[midway,yshift=4pt,xshift=7pt]{$5$}(0);
\draw [m](1)to[]node[midway,yshift=-6pt,xshift=0pt]{$2$}(0);
\draw [m](0)to[]node[midway,yshift=0pt,xshift=6pt]{$3$}(2);
\draw [m](2)to[]node[midway,yshift=0pt,xshift=-5pt]{$1$}(1);

\draw [m](2)to[]node[midway,yshift=-5pt,xshift=+4pt]{$4$}(3);
\draw [m](2)to[]node[midway,yshift=6pt,xshift=0pt]{$7$}(14);
\draw [m](2)to[]node[midway,yshift=6pt,xshift=0pt]{$6$}(15);
\draw [m](2)to[]node[midway,yshift=-5pt,xshift=-4pt]{$5$}(4);

\draw [m](1)to[]node[midway,yshift=4pt,xshift=-8pt]{$6$}(10);
\draw [m](1)to[]node[midway,yshift=4pt,xshift=-5pt]{$4$}(11);
\draw [m](1)to[]node[midway,yshift=6pt,xshift=-3pt]{$9$}(12);
\draw [m](1)to[]node[midway,yshift=7pt,xshift=0pt]{$8$}(13);
		
\draw [m](0)to[]node[midway,yshift=4pt,xshift=5pt]{$7$}(8);
\draw [m](0)to[]node[midway,yshift=6pt,xshift=2pt]{$8$}(7);
\draw [m](0)to[]node[midway,yshift=6pt,xshift=0pt]{$9$}(6);

\draw [m](3)to[]node[midway,yshift=6pt,xshift=4pt]{$2$}(24);
\draw [m](3)to[]node[midway,yshift=6pt,xshift=0pt]{$3$}(25);
\draw [m](3)to[]node[midway,yshift=10pt,xshift=4pt]{$8$}(5);
\draw [m](3)to[]node[midway,yshift=5pt,xshift=-6pt]{$6$}(26);
\draw [m](3)to[]node[midway,yshift=2pt,xshift=-7pt]{$9$}(27);

\draw [m](5)to[]node[midway,yshift=5pt,xshift=-4pt]{\color{red}{$2$}}(20);
\draw [m](5)to[]node[midway,yshift=0pt,xshift=-3pt]{\color{red}{$7$}}(21);
\draw [m](5)to[]node[midway,yshift=0pt,xshift=-4pt]{\color{red}{$9$}}(22);
\draw [m](5)to[]node[midway,yshift=-3pt,xshift=-3pt]{\color{red}{$?$}}(23);
\draw [m](5)to[]node[midway,yshift=10pt,xshift=-4pt]{$1$}(4);
		
\draw [m](4)to[]node[midway,yshift=3pt,xshift=8pt]{$9$}(19);
\draw [m](4)to[]node[midway,yshift=5pt,xshift=6pt]{$8$}(18);
\draw [m](4)to[]node[midway,yshift=10pt,xshift=4pt]{$4$}(17);
\draw [m](4)to[]node[midway,yshift=11pt,xshift=3pt]{$2$}(16);

\draw [m](32)to[]node[midway,yshift=9pt,xshift=8pt]{$1$}(24);
\draw [m](28)to[]node[midway,yshift=10pt,xshift=5pt]{$5$}(24);
\draw [m](29)to[]node[midway,yshift=10pt,xshift=4pt]{$7$}(24);
\draw [m](30)to[]node[midway,yshift=11pt,xshift=0pt]{$6$}(24);
\draw [m](31)to[]node[midway,yshift=11pt,xshift=-4pt]{\color{blue}{$4$}}(24);
		
\draw [m](33)to[]node[midway,yshift=12pt,xshift=-3pt]{$1$}(25);
\draw [m](34)to[]node[midway,yshift=14pt,xshift=-9pt]{$5$}(25);
\draw[m](35)to[]node[midway,yshift=14pt,xshift=-13pt]{$7$}(25);
\draw[m](36)to[]node[midway,yshift=11pt,xshift=-16pt]{$2$}(25);
\draw[m](37)to[]node[midway,yshift=8pt,xshift=-17pt]{$9$}(25);
		
\draw [m](26)to[]node[midway,yshift=5pt,xshift=-6pt]{$1$}(38);
\draw [m](26)to[]node[midway,yshift=3pt,xshift=-10pt]{$5$}(39);
\draw [m](26)to[]node[midway,yshift=0pt,xshift=-7pt]{$7$}(40);
\draw [m](26)to[]node[midway,yshift=-1pt,xshift=-9pt]{$3$}(41);
\draw[m](26)to[]node[midway,yshift=-3pt,xshift=-9pt]{\color{blue}{$8$}}(42);
		
\draw [m](27)to[]node[midway,yshift=8pt,xshift=0pt]{$1$}(43);
\draw [m](27)to[]node[midway,yshift=-2pt,xshift=-6pt]{$5$}(44);
\draw [m](27)to[]node[midway,yshift=-8pt,xshift=-7pt]{$7$}(45);
\draw [m](27)to[]node[midway,yshift=-10pt,xshift=-5pt]{$2$}(46);
\draw [m](27)to[]node[midway,yshift=-10pt,xshift=-5pt]{$6$}(47);
		
\draw [m](19)to[]node[midway,yshift=-5pt,xshift=4pt]{\color{blue}{$5$}}(67);
\draw [m](19)to[]node[midway,yshift=-5pt,xshift=4pt]{$4$}(66);
\draw [m](19)to[]node[midway,yshift=-5pt,xshift=7pt]{$7$}(65);
\draw [m](19)to[]node[midway,yshift=-4pt,xshift=8pt]{$6$}(64);
\draw [m](19)to[]node[midway,yshift=5pt,xshift=3pt]{$3$}(63);
		
\draw [m](18)to[]node[midway,yshift=-2pt,xshift=9pt]{$2$}(62);
\draw [m](18)to[]node[midway,yshift=0pt,xshift=9pt]{$9$}(61);
\draw [m](18)to[]node[midway,yshift=2pt,xshift=6pt]{$7$}(60);
\draw [m](18)to[]node[midway,yshift=5pt,xshift=6pt]{$3$}(58);
\draw [m](18)to[]node[midway,yshift=4pt,xshift=7pt]{$6$}(59);
		
\draw[m](17)to[]node[midway,yshift=6pt,xshift=16pt]{\color{blue}{$1$}}(57);
\draw [m](17)to[]node[midway,yshift=9pt,xshift=15pt]{$8$}(56);
\draw [m](17)to[]node[midway,yshift=15pt,xshift=16pt]{$7$}(55);
\draw [m](17)to[]node[midway,yshift=16pt,xshift=13pt]{$6$}(54);
\draw [m](17)to[]node[midway,yshift=10pt,xshift=4pt]{$3$}(53);
		
\draw [m](16)to[]node[midway,yshift=11pt,xshift=3pt]{$4$}(52);
\draw [m](16)to[]node[midway,yshift=9pt,xshift=-8pt]{$3$}(48);
\draw [m](16)to[]node[midway,yshift=10pt,xshift=-4pt]{$6$}(49);
\draw [m](16)to[]node[midway,yshift=10pt,xshift=-1pt]{$7$}(50);
\draw [m](16)to[]node[midway,yshift=12pt,xshift=3pt]{$9$}(51);
\end{tikzpicture}
}
\caption{A Cactus $G$ with $\Delta=6$ and $\chi_s(G)=10$.}\label{fig:10}
\end{figure}
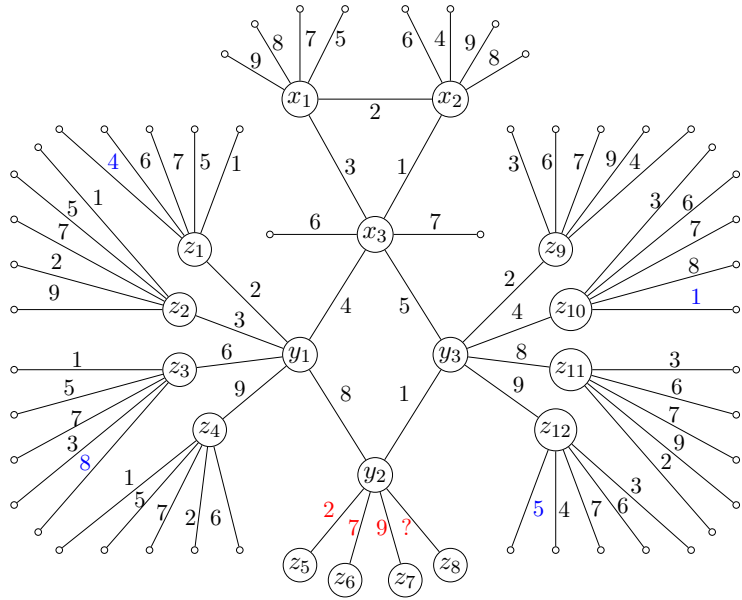
%\vspace{-0.5cm}
%\vspace{-0.5cm}
%----------------------------------------------------------------------------------------------------------------------
%----------------------------------------------------------------------------------------------------------------------

%----------------------------------------------------------------------------------------------------------------------
%----------------------------------------------------------------------------------------------------------------------

%----------------------------------------------------------------------------------------------------------------------
\end{document}